\newtheorem{theorem}{Theorem}[]
\newtheorem{lemma}[theorem]{Lemma}
\newtheorem{definition}[theorem]{Definition}
\newtheorem{remark}[theorem]{Remark}
\def \Cm {\mathbb{C}}
\def \Rm {\mathbb{R}}
\def \Sm {\mathbb{S}}
\def \Zm {\mathbb{Z}}
\def\M{\mathcal{M}}
\newcommand{\cout}[1]{}
\newcommand{\x}{\mathrm{x}}
\newcommand{\dbar}{\overline{\partial}}
\newcommand{\hd}{H^\text{sol}}
\newcommand{\sobzero}{W^{1,2}_0}
\newcommand{\sob}{W^{1,2}}
\newcommand{\id}{ {\text{id}}}
\title{Inverse source problems in transport via attenuated tensor tomography}
\author{Guillaume Bal\thanks{Departments of Statistics and Mathematics, University of Chicago, Chicago IL 60637; email: \hbox{guillaumebal@uchicago.edu}} \and Fran\c{c}ois Monard\thanks{Department of Mathematics, University of California, Santa Cruz CA 95064; email: fmonard@ucsc.edu}}
\begin{document}

\maketitle

\begin{abstract} We establish results for the injectivity and injectivity modulo gauge of certain inverse source problems in transport on a simply connected domain with variable index of refraction inducing a 'simple geometry'. The model given by radiative transfer involves a scattering kernel with finite harmonic content in the deviation angle. The results on injectivity are constructive, and they are connected to the explicit inversion (modulo kernel) of the attenuated X-ray transform on tensor fields on simple Riemannian surfaces. 
\end{abstract}

\section{Introduction} 

We consider an inverse problem associated with the propagation of photons with attenuation and scattering effects, in a two-dimensional medium with variable index of refraction. The domain $M$ is simply connected in $\Rm^2$, endowed with a Riemannian metric $g = c^{-2}(\x) \id$ where $c(\x)$ corresponds to the local speed induced by a variable index of refraction. In this case, the dynamics takes place in the unit phase space (or unit tangent bundle)
\begin{align*}
    SM = \{(\x,v)\in TM,\ g_\x(v,v) = 1\},
\end{align*}
with ingoing/outgoing boundaries 
\begin{align}
  \Gamma_\pm := \{ (\x,v) \in SM,\ \x\in \partial M,\ \pm g_\x(v,\nu_\x) >0 \},
  \label{eq:gammapm}
\end{align}
where $\nu_\x$ denotes the unit outward normal at $\x\in \partial M$. Linear propagation is modeled by the geodesic flow of $g$, $\varphi_t:SM\to SM$, and we assume that the flow is {\em simple} (see Sec. \ref{sec:notation} below). Representing a unit tangent vector as $v = c(\x) \binom{\cos\theta}{\sin\theta}\in S_\x M$, the infinitesimal generator of the linear transport is given by 
\begin{align}
    X = c(\x) (\cos\theta \partial_x + \sin\theta \partial_y) + (-\partial_y c \cos \theta  + \partial_x c \sin\theta ) \partial_\theta.
    \label{eq:X}
\end{align}
Attenuation effects are modeled via a position dependent function $a(\x)$, and scattering is defined through a collision kernel $k(\x,\theta,\theta')$. The propagation of the density of photons is then modeled by the following PDE posed in $SM$, to be solved for the photon density $u(\x,\theta)$:   
\begin{align}
    X u + a (\x) u = \frac{1}{2\pi} \int_{\Sm_x} k(\x, \theta, \theta') u(\x,\theta')\ d\theta' + f(\x,\theta) \qquad (SM), \qquad \quad u|_{\Gamma_-} = h,
  \label{eq:transport}
\end{align}
in a sense made precise in Section \ref{sec:forward}. The 'inputs' $h$ and $f$ generate the solution, and under admissibility conditions on the metric $g$ and the optical parameters $(a, k)$, one may define uniquely $u|_{\Gamma_+}$ as a function of $(h,f)$, allowing to formulate several inverse problems: 

(P1) In the case $f = 0$, the so obtained boundary-to-boundary map is called the {\em albedo operator} ${\mathcal A}_{a,k} \colon L_\mu^1(\Gamma_-)\to L_\mu^1(\Gamma_+)$, a 'transmission' setting where the typical inverse question is whether one can reconstruct $(a,k)$ from the albedo operator $\mathcal A_{a,k}$. Such a problem has been well studied in the literature, in Euclidean settings \cite{Choulli1999,Choulli1996,Stefanov2009a,Bal2010b,Bal2008b}, on Riemannian manifolds (modeling media with variable index of refraction) \cite{McDowall2009,McDowall2005,McDowall2004,McDowall2010,McDowall2011,McDowall2010a}, with a recent generalization to the case where a magnetic field is present \cite{Assylbekov2015}.  

(P2) In the case $h=0$, the setting covered in what follows, the internal source $f$ generates the outward radiated field $u|_{\Gamma_+}$, an 'emission' setting with applications to SPECT and Optical Molecular Imaging, where the inverse problem is to reconstruct the source $f(\x,v)$ from $u|_{\Gamma_+}$. In the absence of scattering (i.e., $k\equiv 0$), this is equivalent to inverting the so-called {\em attenuated X-ray transform} of $f$, which in the case where $f$ is polynomial in the $v$ variable, can also be viewed as the {\em attenuated tensor tomography problem}. The study of the attenuated X-ray transform has advanced quite a bit over the past few years \cite{Salo2011,Paternain2011a,Monard2015,Assylbekov2017,Monard2017a}, an in turn motivates the present results. The main literature on the inverse source problem is when the source $f$ is isotropic: in Euclidean cases when scattering is small, Bal and Tamasan proposed a Neumann-series type inversion in \cite{Bal2007}, based on the inversion of the attenuated X-ray transform over functions; for general scattering, the problem was proved to be Fredholm (injective up to a finite-dimensional space of obstructions) in \cite{Stefanov2008a}; in Riemannian settings, Sharafutdinov proved a stability estimate using energy identities (in particular proving injectivity) under a no-conjugate point assumption involving an effective curvature combining the curvature of the manifold as well as the optical parameters, in turn implying a smallness assumption on the optical parameters see \cite{Sharafutdinov1994,Sharafutdinov1999}. 

In the present work, we are concerned with the inverse source problem (P2), where the setting is a medium with variable index of refraction, whose linear transport induces a 'simple' flow. Compared to earlier results, the smallness assumption on $k$ is dropped and replaced by the condition that $k$ has finite harmonic content in the angular deviation variable. We further consider sources with polynomial angular dependence, for which the corresponding inverse problem may or may not be injective, and in the case where it is not, we fully characterize the extent of non-injectivity. In the injective cases, the approach is constructive, and as explicit as the solution to the attenuated tensor tomography is. In particular, in the case of the Euclidean disk, a complete answer involving explicit reconstruction formulas is provided in \cite{Monard2017a}, while for simple surfaces, a partially explicit inversion is proposed in \cite{Krishnan2018} (the missing point being an explicit construction of special geodesically invariant distributions). 

The main tool is to make use of the recent results for the {\em attenuated tensor tomography problem} in \cite{Monard2017a,Krishnan2018}. Specifically, for the attenuated transform defined over tensor fields (or equivalently, over functions of $(\x,v)$ with polynomial dependence on $v$), while the problem admits a non-trivial gauge, one is able to devise a representative to be reconstructed as well as an explicit and efficient procedure to reconstruct it. In other words, we are fixing a gauge for the attenuated transform, where the dependency of the reconstructed candidate in terms of the unknown source term $f(\x,v)$ and the initial transport solution $u$ is fully understood. We are then able to extract $f$ (entirely if $f$ is isotropic, partially otherwise) from this reconstructed candidate. The results presented are specifically two-dimensional, as they rely heavily on complex-analytic tools, and the three-dimensional case would also be of great interest. 

Finally, the authors were recently made aware of the recent work \cite{Fujiwara2019}, also addressing the inverse source problem in transport. A similar feature is that both articles exploit the finiteness in harmonic content of the scattering kernel, in a crucial way, to reconstruct a source term, though using different methods in different contexts (the theory of A-analytic functions used in \cite{Fujiwara2019} restricts the analysis to the case of constant index of refraction, which on the other hand allows for a good understanding of stability properties). The present work also addresses anisotropic sources.  

We now present the main results and give an outline of the remainder at the end of the next section.

\section{Statement of the main results} \label{sec:main}

\subsection{Notation} \label{sec:notation}

Recall that we denote the phase space $SM := \{(\x,v)\in TM,\ g_\x(v,v) = 1\}$, with boundary 
\begin{align*}
    \partial SM = \{(\x,v)\in SM,\ \x\in \partial M\} = \Gamma_+ \sqcup \Gamma_- \sqcup \Gamma_0,
\end{align*}
where $\Gamma_\pm$ are defined in \eqref{eq:gammapm} and $\Gamma_0$ (the 'glancing' boundary), is defined through the condition $g(\nu_x,v) = 0$. We denote $\pi\colon SM\to M$ the canonical projection and for any $(\x,v)\in SM$, we define $\tau(\x,v)$ the smallest non-negative time $t$ such that $\pi(\varphi_t(\x,v)) \in \partial M$. Thoughout the article, we will assume that $(M,g)$ is {\em strictly convex}\footnote{in the sense that $\partial M$ has positive definite second fundamental form.} and {\em non-trapping}\footnote{in the sense that $\tau(\x,v) <\infty$ for all $(\x,v)\in SM$}. The inverse results will further assume that $(M,g)$ be {\em simple}, in the sense that it is strictly convex, non-trapping, and has no conjugate points.

The first two requirements on $M$ imply that $M$ is simply connected. Therefore we will not restrict generality by employing a global 'isothermal' chart $(x,y,\theta) = (\x,\theta)$ of $SM$ where the metric looks like $g = c^{-2}(\x)\id$ for some fixed scalar 'speed' function $c(\x)>0$, and where a tangent vector $v$ is uniquely described by the angle $\theta$ through the correspondence $v = c(\x) \binom{\cos\theta}{\sin \theta}$. In these coordinates, the geodesic vector field $X$ takes expression \eqref{eq:X}, and $SM$ carries the 'Liouville' volume form $d\Sigma^3 := c^{-2}(\x)\ d\x\ d\theta$ (with $d\x$ the Euclidean area). In what follows, for $p\in [1,\infty]$, one may denote $L^p(SM)$ spaces with respect to that volume. The inward/outward boundaries $\Gamma_\pm$ also inherit the surface measure $d\Sigma^2 = ds\ d\theta$, where $ds$ denotes $g$-arclength along the boundary $\partial M$. For $(\x,v) \in \partial SM$, we let $\mu(\x,v) := g_\x(\nu_\x,v)$, and will denote $L^p(\Gamma_\pm) := L^p(\Gamma_\pm, d\Sigma^2)$, as well as $L^p_\mu (\Gamma_\pm) := L^p(\Gamma_\pm, \pm \mu\ d\Sigma^2)$. Naturally, $L^p(\Gamma_\pm) \subsetneq L^p_\mu (\Gamma_\pm)$.

\paragraph{Fourier analysis on $SM$.} Via Fourier series in $\theta$, functions in $L^2(SM)$ may be decomposed into circular harmonics 
\begin{align}
    L^2(SM) = \bigoplus_{k=0}^\infty H_k, \qquad H_k := \ker(\partial_\theta^2 + k^2 Id).
    \label{eq:harmonics}
\end{align}
The subspace $H_0$ is isometric to $L^2(M)$ and for $k\ge 1$, $f\in H_k$ if and only if $f = f_{k,+} + f_{k,-}$ with $f_{k,\pm}(\x,\theta) = \tilde f_{k,\pm} (\x) e^{\pm i k\theta}$ for some functions $\tilde f_{k,\pm}\in L^2(M)$ (we write this splitting $H_k = E_k \oplus E_{-k}$ for $k>0$). A function $u\in L^2(SM,\Cm)$ decomposes accordingly
\begin{align*}
    u(\x,\theta) &= \sum_{k=0}^\infty u_k(\x,\theta) = u_0(\x) + \sum_{k=1}^\infty (\tilde u_{k,+}(\x)e^{ik\theta} + \tilde u_{k,-}(\x) e^{-ik\theta}), \\
    \|u\|^2 &= 2\pi \left( \|u_0\|_M^2 + \sum_{k=1}^\infty (\|\tilde u_{k,+}\|_M^2 + \|\tilde u_{k,-}\|_M^2) \right). \qquad (\text{Parseval})
\end{align*}
Since the notation $H$ is reserved for subspaces of $L^2(SM)$, Sobolev-type spaces will be denoted using $\sob (H_k)$ for the space of elements in $H_k$ whose spatial components lie in the usual Sobolev space $\sob(M)$ ($L^2$ functions with gradient in $L^2$). Similarly, we will denote the $\sob(M)$-closure of $C_c^\infty(M)$ by $\sobzero(M)$. 

Finally, we will say that $u$ is of degree $m$ if in the decomposition \eqref{eq:harmonics}, $u_k = 0$ for all $k>m$.

\subsection{Main results: applications of attenuated tensor tomography to the inverse source problem}

Consider the equation
\begin{align}
    X u + a(\x) u = Su(\x,\theta) + f(\x, \theta) \qquad (SM), \qquad u|_{\Gamma_-} = 0, 
    \label{eq:boltzmann}
\end{align}
where the scattering term takes the form $Su(\x,\theta) := \frac{1}{2\pi} \int_{\Sm^1} k(\x,\theta-\theta') u(\x,\theta')\ d\theta'$. In the transport litterature, $a = \sigma_p + \sigma_a$, where $\sigma_a(\x)$ represents pure loss due to absorption and $\sigma_p (\x) = \frac{1}{2\pi} \int_{\Sm^1} k(\x,\theta)\ d\theta = k_0 (\x)$. We say that $(a,k)$ is {\em admissible} if they satisfy
\begin{align}
    \begin{split}
	0\le a\in L^\infty(M) \quad \text{and} \quad 0\le k (\x,\cdot) \in L^1(S_\x M)\quad \text{for a.e. } \x\in M.
    \end{split}
    \label{eq:admissibility}
\end{align}
In addition, we define the {\em subcriticality} condition as
\begin{align}
    \sigma_a (\x) \ge \delta>0 \qquad \text{for some positive constant } \delta.
    \label{eq:subcrit}
\end{align}
We first show in Section \ref{sec:forward} that under conditions \eqref{eq:admissibility}-\eqref{eq:subcrit}, Equation \eqref{eq:boltzmann} has a unique solution $u$, whose outgoing trace allows to define our measurement operator.
\begin{theorem}\label{thm:fwd}
    Suppose that $(M,g)$ is convex and non-trapping, and let optical parameters $a, k$ satisfy conditions \eqref{eq:admissibility}-\eqref{eq:subcrit}. Then the operator 
    \begin{align}
	\M_{a,k} \colon L^2(SM) \to L^2(\Gamma_+), \qquad \M_{a,k} f := u|_{\Gamma_{+}},\qquad \text{where } u \text{ solves } \eqref{eq:boltzmann},
	\label{eq:measurement}
    \end{align}
    is well-defined and continuous, with norm not exceeding $\sqrt{C_0} \left( \frac{Q_\infty}{\delta} + 1 \right)$, where $C_0$ is a convexity constant defined in \eqref{eq:C0}, and $Q_\infty := \sup_{\x\in M} a(\x) + \sup_{\x\in M} \int_{S_\x M} k(\x,\alpha)\ d\alpha$.  
\end{theorem}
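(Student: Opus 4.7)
The plan is to combine an $L^2$-energy identity for \eqref{eq:boltzmann} with an $L^2$-mapping property of the forward ray transform on the convex non-trapping manifold $(M,g)$. Existence and uniqueness of $u \in L^2(SM)$ with trace $u|_{\Gamma_-}=0$ should follow from standard stationary transport theory: view $L := X + a - S$ as a closed operator on $L^2(SM)$ whose domain encodes $u|_{\Gamma_-}=0$, and use the non-negativity of $a$ and $k$ together with \eqref{eq:subcrit} to establish accretivity of $L$. Well-posedness then follows, e.g., from a Lions-type variational argument for non-symmetric forms, or equivalently from inverting $T_0 := X + a$ on the zero-incoming subspace and handling scattering via a Fredholm/coercivity argument based on the a priori bound below.

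The core is the a priori $L^2$-energy estimate. Multiply \eqref{eq:boltzmann} by $u$, integrate against $d\Sigma^3$, and use the divergence identity $\int_{SM} u\,Xu\, d\Sigma^3 = \frac{1}{2}\|u\|_{L^2_\mu(\Gamma_+)}^2$ (recalling $u|_{\Gamma_-}=0$). A fiberwise Young convolution estimate in $\theta$ gives $\|Su(\x,\cdot)\|_{L^2(\Sm^1)} \leq \sigma_p(\x) \|u(\x,\cdot)\|_{L^2(\Sm^1)}$ since $\|k(\x,\cdot)\|_{L^1}=2\pi\sigma_p(\x)$, and Cauchy--Schwarz then yields $\int_{SM} u\,Su\, d\Sigma^3 \leq \int_{SM} \sigma_p u^2\, d\Sigma^3$. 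Writing $a = \sigma_a + \sigma_p$ and using \eqref{eq:subcrit}, one obtains
\begin{align*}
    \tfrac{1}{2}\|u\|_{L^2_\mu(\Gamma_+)}^2 + \delta \|u\|_{L^2(SM)}^2 \leq \int_{SM} u\,f\, d\Sigma^3 \leq \|u\|_{L^2(SM)} \|f\|_{L^2(SM)},
\end{align*}
hence the interior bound $\|u\|_{L^2(SM)} \leq \delta^{-1} \|f\|_{L^2(SM)}$ at once.

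To upgrade from the weighted outgoing trace $L^2_\mu(\Gamma_+)$ to the unweighted $L^2(\Gamma_+)$ norm required in the conclusion, I would use integration along characteristics. Using the integrating factor and $u|_{\Gamma_-}=0$, one has, for $(\x,v)\in\Gamma_+$,
\begin{align*}
    u(\x,v) = \int_{-\tau_-(\x,v)}^0 e^{-\int_t^0 a(\pi\varphi_s(\x,v))\, ds}\bigl[Su + f\bigr]\!\bigl(\varphi_t(\x,v)\bigr)\, dt,
\end{align*}
where $\tau_-(\x,v) := \tau(\x,-v)$ is the backward exit time. Since $a \geq 0$, $|u(\x,v)| \leq I|Su+f|(\x,v)$, with $I\phi(\x,v) := \int_{-\tau_-(\x,v)}^0 \phi(\varphi_t(\x,v))\, dt$ the backward ray transform. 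The constant $C_0$ appearing in \eqref{eq:C0} encodes precisely the $L^2(SM)\to L^2(\Gamma_+)$ boundedness of $I$ with norm $\sqrt{C_0}$, a standard Santal\'o-type consequence of strict convexity and absence of trapping. Combined with $\|Su\|_{L^2(SM)} \leq (\sup_M \sigma_p) \|u\| \leq Q_\infty \|u\|$ and the interior estimate, this gives
\begin{align*}
    \|u\|_{L^2(\Gamma_+)} \leq \sqrt{C_0}\,\|Su+f\|_{L^2(SM)} \leq \sqrt{C_0}\left(\frac{Q_\infty}{\delta} + 1\right)\|f\|_{L^2(SM)},
\end{align*}
which is the asserted bound on the operator norm of $\M_{a,k}$.

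The main obstacle, as is typical for $L^2$ transport problems, is the rigorous treatment of boundary traces for merely $L^2$ solutions: the trace $u|_{\Gamma_\pm}$ is not defined a priori, and the integration-by-parts identity used above requires a Cessenat-type trace theorem for the graph space $\{u\in L^2(SM) : Xu \in L^2(SM)\}$. Once this functional-analytic framework is in place (either cited from the literature or verified by approximating $u$ by smooth functions supported away from the glancing set $\Gamma_0$), the remaining steps are the direct energy identity and ray-transform bound outlined above.
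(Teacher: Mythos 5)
Your proposal follows essentially the same strategy as the paper: coercivity of the absorption-minus-scattering operator (your fiberwise Young/Cauchy--Schwarz bound $(Su,u)\le(\sigma_p u,u)$ is exactly the paper's estimate \eqref{eq:Q}), the resulting interior bound $\|u\|\le\delta^{-1}\|f\|$, and a Santal\'o-based outgoing trace bound with the constant $\sqrt{C_0}$ of \eqref{eq:C0}, which is precisely the content of Lemmas \ref{lem:trace} and \ref{lem:traces2}; the graph-space trace theorem you defer to at the end is exactly what the paper builds (including the warning of Remark \ref{rem:notrace} that traces do \emph{not} extend separately to $L^2_\mu(\Gamma_\pm)$, only $T$ and traces on compact subsets of $\Gamma_\pm$ do). Two points of comparison. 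First, on solvability: accretivity of $L=X+a-S$ alone does not give surjectivity, and the paper completes the argument by checking that both $X+Q-\delta\,Id$ (with $Qu=au-Su$) and its adjoint $-X+Q-\delta\,Id$, on the domains $W^{1,2}_{X,\mp}(SM)$, are accretive, invoking a maximal-accretivity theorem of Dautray--Lions, and reading off existence together with $\|u\|\le\delta^{-1}\|f\|$ from the resolvent estimate at $\lambda=\delta$. Your alternative of inverting $X+a$ and treating $S$ by a ``Fredholm/coercivity'' argument is the weak spot: $\|(X+a)^{-1}S\|\le \sup\sigma_p/\delta$ is not small under \eqref{eq:subcrit}, and $(X+a)^{-1}S$ is not obviously compact on $L^2(SM)$, so some version of the adjoint/maximality (or closed-range plus dense-range duality) step is genuinely needed rather than optional. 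Second, your boundary estimate uses the attenuated Duhamel representation and $e^{-\int a}\le1$ to get $\|u\|_{L^2(\Gamma_+)}\le\sqrt{C_0}\,\|Su+f\|$, whereas the paper applies the trace bound of Lemma \ref{lem:traces2} to $Xu=f-Qu$, giving $\sqrt{C_0}(\|Qu\|+\|f\|)$; both yield the stated bound $\sqrt{C_0}(Q_\infty/\delta+1)$, and yours is in fact marginally sharper since only $\sup\sigma_p\le Q_\infty$ enters the scattering term.
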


It is then natural to consider the {\em inverse source problem}, which consists of reconstructing the source term $f$ from the measurements $\M_{a,k} (f)$. In the absence of scattering, such a problem is nothing but the inversion of the attenuated X-ray (or Radon) transform $I_{\sigma_a}$. In this case, $f$ cannot be reconstructed uniquely unless the dependency of $f$ in $(\x,\theta)$ is reduced, and some typical examples where injectivity holds is for $f = f_0(\x) + X_\perp f_\perp$ or $f(\x,\theta) = f_1(\x) e^{i\theta} + f_{-1} (\x) e^{-i\theta}$ (the case of vector fields). If $f$ is direction-dependent but with finite degree, the attenuated X-ray transform still has a well-understood kernel. Now, in the presence of scattering and if both $f$ and $k$ have finite degree, with $k$ of the form
\begin{align*}
    k(\x,\alpha) = \sum_{n=-m}^m \tilde k_n(\x) e^{in\alpha}, 
\end{align*}
then equation \eqref{eq:boltzmann}, upon representing $u(\x,\theta)$ into a Fourier series, takes the form 
\begin{align*}
  X u + a(\x) u = \sum_{n=-m}^m \tilde k_n(\x) \tilde u_n(\x) e^{in\theta} + f(\x,\theta). 
\end{align*}
In particular, the right-hand side has finite degree, and $\M_{a,k} f = I_a [Su + f]$ lies in the range of the attenuated X-ray transform defined over functions of high enough degree. The goal is then to reconstruct $f$ (or a gauge representative of it) by removing the effects of the scattering term $Su$. 

As in the absence of scattering, this problem may have a large kernel for general direction-dependent sources. We therefore present two results corresponding to cases where the problem is injective, and cases where the problem has a gauge. In the latter, we fully characterize the gauge. In either case, the frequency content of $k$ makes it necessary to work with the attenuated X-ray transform on tensor fields. Since it has a large kernel there, to reconstruct a source, one must $(i)$ make a judicious choice of which representative is being reconstructed, and $(ii)$ extract $f$ from it. For the statement below, we define
\begin{align}
    X_\perp := c(\x) (\sin\theta \partial_x - \cos\theta \partial_y) - (\partial_x c \cos \theta + \partial_y c \sin \theta) \partial_\theta.
    \label{eq:Xperp}
\end{align}

\begin{theorem}\label{thm:main1} Let $(M,g)$ be a simple domain and $(a,k)$ a pair of optical parameters satisfying \eqref{eq:admissibility}-\eqref{eq:subcrit}, with $k$ of finite degree and $a$ smooth. Suppose that the source $f$ is of either of the following forms: 
    
    (1) $f = f_0 + X_\perp f_\perp$ for $f_0 \in L^2(M)$ and $f_\perp \in \sob(M)$. 

    (2) $f = f_1 = f_{1,+}(\x) e^{i\theta} + f_{1,-}(\x) e^{-i\theta}$ (vector field) for $f_{1,\pm}\in L^2(M)$. 
    
    \noindent Then the mapping $f\mapsto \M_{a,k} (f)$ is injective and constructively invertible. 
\end{theorem}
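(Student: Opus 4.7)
The plan is to recast the data as the attenuated X-ray transform of a finite-degree function on $SM$, invoke the kernel characterization for the attenuated tensor tomography problem from \cite{Monard2017a, Krishnan2018}, and conclude injectivity via a Fourier-mode cascade followed by a low-mode Cauchy-Riemann analysis. Setting $g := Su + f$, the finiteness assumptions $\deg k = m$ and $\deg f \le 1$ give $\deg g \le m' := \max(m, 1)$, and the transport equation $(X+a)u = g$ with $u|_{\Gamma_-}=0$ (Theorem~\ref{thm:fwd}) identifies $\M_{a,k} f = u|_{\Gamma_+} = I_a g$, where $I_a$ denotes the attenuated X-ray transform with weight $a$.

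Suppose $\M_{a,k} f = 0$. By the kernel characterization of $I_a$ on finite-degree functions over simple surfaces, established in \cite{Monard2017a, Krishnan2018}, $g = (X+a)p$ for some $p$ with $\deg p \le m'-1$ and $p|_{\partial SM}=0$. Comparing with $(X+a)u = g$ and using the vanishing incoming traces of $u$ and $p$, uniqueness for the pure attenuated transport yields $u = p$. Hence $u$ itself has degree at most $m'-1$, vanishes on $\partial SM$, and satisfies
\begin{equation*}
    Lu := (X + a - S) u = f.
\end{equation*}
It suffices to show that any such $u$ with $Lu$ of the prescribed form must vanish identically.

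I would prove this by a Fourier-mode cascade. Decomposing $X = \eta_+ + \eta_-$ with raising/lowering operators $\eta_\pm \colon H_n \to H_{n\pm 1}$, and noting that $S$ acts diagonally with symbol $\tilde k_n$ (null for $|n|>m$), the mode-$n$ component of $Lu = f$ reads $\eta_+ \tilde u_{n-1} + \eta_- \tilde u_{n+1} + (a - \tilde k_n)\tilde u_n = f_n$. Since $\deg f \le 1$, $f_n = 0$ for $|n| \ge 2$. At $n = m'$, using $\tilde u_{m'} = \tilde u_{m'+1} = 0$ (since $\deg u \le m'-1$), the equation degenerates to $\eta_+ \tilde u_{m'-1} = 0$; combined with $\tilde u_{m'-1}|_{\partial M}=0$ and the injectivity of the fiberwise Cauchy-Riemann operators $\eta_\pm$ on simple surfaces (a core ingredient of \cite{Monard2017a, Krishnan2018}), this forces $\tilde u_{m'-1} = 0$. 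Descending through $n = m'-1,\ldots,2$ and symmetrically through the negative modes yields $\tilde u_n = 0$ for all $|n| \ge 1$, reducing $u$ to its isotropic part $\tilde u_0$. In case (2), the mode-$0$ equation then reads $\sigma_a \tilde u_0 = 0$, and subcriticality \eqref{eq:subcrit} forces $\tilde u_0 = 0$. In case (1), the mode-$\pm 1$ equations reduce to $\eta_+(\tilde u_0 + i f_\perp) = 0$ and $\eta_-(\tilde u_0 - i f_\perp) = 0$, which express that $\tilde u_0 + i f_\perp$ is fiberwise holomorphic on $M$; the simple-connectedness of $M$ and $\tilde u_0|_{\partial M}=0$ then force $\tilde u_0 \equiv 0$ and $f_\perp \equiv$ const, whence $f_0 = \sigma_a \tilde u_0 = 0$ and $X_\perp f_\perp = 0$.

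The main obstacle is the injectivity statement ``$\eta_+ w = 0$ on $M$ together with $w|_{\partial M}=0$ implies $w \equiv 0$'' in the genuinely Riemannian (non-Euclidean) setting: this is not an elementary ODE or maximum-principle fact but is provided by the fiberwise holomorphic integrating factor construction underpinning the tensor tomography results of \cite{Monard2017a} (Euclidean case) and \cite{Krishnan2018} (simple surfaces). With this ingredient secured, the argument above becomes constructive, since the explicit inversion formulas for $I_a$ in those references turn each step of the cascade into an explicit reconstruction of $u$, and hence of $f$.
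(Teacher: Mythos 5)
Your injectivity argument is essentially sound, and it is in fact the zero-data specialization of the paper's own scheme: the key input is the same (the decomposition/recovery result of Theorem \ref{thm:KMM}, which here plays the role of your ``kernel characterization'' of $I_a$ on finite-degree integrands), the identification $u=p$ is correct, and the descending mode-by-mode cascade with the first-order equations $\eta_+\tilde u_{k}=(\text{known})$, $\tilde u_k|_{\partial M}=0$ matches the paper's triangular system. Two small remarks on that part: (i) the fact that $\eta_\pm w=0$ with vanishing boundary trace forces $w=0$ is \emph{not} where the depth lies --- in the global isothermal chart $\eta_+(\tilde w e^{ik\theta})=e^{i(k+1)\theta}c^{1-k}\partial(c^k\tilde w)$, so $c^k\tilde w$ is an (anti-)holomorphic function in $\sobzero(M)$ and hence vanishes; the holomorphic integrating factors of Salo--Uhlmann are needed for the attenuated-transform step (Theorem \ref{thm:KMM}), not here; (ii) in Case (1) your conclusion $f_\perp\equiv\mathrm{const}$, $\tilde u_0\equiv0$, $f_0=0$ correctly yields $f=0$, which is all injectivity requires since $f_\perp$ only enters through $X_\perp f_\perp$.

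The genuine shortfall is with respect to the second half of the statement: the theorem asserts that $\M_{a,k}$ is \emph{constructively invertible}, and in the paper this is the actual content of the proof, not a corollary of injectivity. With nonzero data one must (a) reconstruct the gauge representative $\tilde f$ of $f+Su$ from $\M_{a,k}(f)$ via Theorem \ref{thm:KMM}, (b) obtain $u-p$ by direct integration of $(X+a)(u-p)=\tilde f$, and (c) run your cascade on \eqref{eq:temp2} with the nonzero known right-hand sides $S(u-p)-\tilde f$, solving Poisson problems $\Delta_g\tilde p_{k,+}=4c^2\dbar(c^{k-1}\tilde h_{k+1,+})$ with Dirichlet data for $p_{m-1},\dots,p_1$. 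Your closing sentence asserts this extension but does not carry it out, and in Case (1) it is not automatic: the low-mode step becomes the coupled system \eqref{eq:p0fperp} with nonzero sources $s_{1,\pm}$, and recovering $f_\perp$ requires decoupling it into a Dirichlet problem for $p_0$ and a \emph{Neumann} problem for $f_\perp$, the Neumann datum being extracted by evaluating $iX_\perp p_0+iXf_\perp=s_{1,+}-s_{1,-}$ at $(\x,\nu_\x)$, where $X_\perp p_0$ is tangential. Your zero-data holomorphicity trick does not supply this boundary condition, so as written the proposal proves injectivity but leaves the constructive-inversion claim unestablished; supplying steps (a)--(c) above (which is exactly what the paper does) closes the gap.
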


Case (1) above with $f_\perp = 0$ corresponds to isotropic sources and is the most common example in, e.g. Optical Molecular Imaging. Case (2) is applicable to Doppler Tomography \cite{Sparr1995}. Finally, we state a second result for the case of more general sources, where the problem now has a gauge. The main result is a full characterization of the gauge.

\begin{theorem}\label{thm:main2} Let $(M,g)$ be a simple domain, $(a,k)$ a pair of optical parameters satisfying \eqref{eq:admissibility}-\eqref{eq:subcrit}, with $k$ of finite degree and $a$ smooth. Suppose that the source $f$ has degree at most $m\ge 1$. Then $\M_{a,k}(f) = 0$ if and only if there exists $p$ of degree $m-1$ with components in $\sobzero(M)$ such that 
    \begin{align*}
	f = Xp + ap - Sp.
    \end{align*}
\end{theorem}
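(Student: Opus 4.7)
The ``if'' direction is a direct application of forward uniqueness: given $p$ of degree $m-1$ with components in $\sobzero(M)$ and $f = Xp+ap-Sp$, the function $p$ itself solves \eqref{eq:boltzmann} with this source and zero inflow trace, so Theorem \ref{thm:fwd} forces $u=p$ and $\M_{a,k}(f) = p|_{\Gamma_+} = 0$.

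For the ``only if'' direction, my candidate is $p := u$, the transport solution associated with $f$. The hypothesis $\M_{a,k}(f) = 0$ together with the construction $u|_{\Gamma_-} = 0$ gives $u|_{\partial SM} = 0$, so each Fourier component $u_n$ vanishes on $\partial M$. The task reduces to showing that $u$ has degree at most $m-1$. As a first step I would rewrite \eqref{eq:boltzmann} as $(X+a)u = Su+f =: h$: since $S$ truncates $u$ to the frequency band of $k$, $h$ has finite degree $N \leq \max(m,\deg k)$, and $u|_{\partial SM} = 0$ gives $I_a h = 0$. Invoking the kernel characterization of the attenuated X-ray transform on finite-degree sources over simple surfaces from \cite{Monard2017a,Krishnan2018} then furnishes a $q$ of degree $\leq N-1$ with components in $\sobzero(M)$ such that $h = (X+a)q$; transport uniqueness identifies $q$ with $u$, so $u$ has finite degree $d \leq N-1$ with Sobolev/boundary regularity on each Fourier coefficient.

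The sharpening from $d \leq N-1$ to $d \leq m-1$ is the main delicate point, which I would handle by contradiction. Assume $d \geq m$ with $u_d \neq 0$ (the case $u_{-d} \neq 0$ is symmetric). Projecting $(X+a-S)u = f$ onto the $e^{i(d+1)\theta}$ Fourier component, the $X$-term contributes only the raising piece of $Xu$ applied to $u_d$ (because $u_{d+1}=u_{d+2}=0$); the $a$- and $S$-terms vanish since both operators preserve Fourier mode and $u_{d+1}=0$; and the source contribution $f_{d+1}$ vanishes because $d+1>m$. Hence $u_d$ lies in the kernel of the raising operator on $E_d$, while simultaneously $u_d|_{\partial M}=0$. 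On a simple Riemannian surface the raising and lowering components of $X$ (arising from the complex structure built from $X$ and $X_\perp$, see \eqref{eq:X}--\eqref{eq:Xperp}) are Cauchy--Riemann-type operators, whose kernel on $E_d$-sections vanishing on $\partial M$ is trivial -- the ``holomorphic functions vanishing on the boundary are zero'' principle that also underlies the injectivity results of \cite{Monard2017a}. This forces $u_d=0$, a contradiction; the analogous projection onto $e^{-i(d+1)\theta}$, using the lowering operator, disposes of $u_{-d}$.

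Once $d \leq m-1$ is established, setting $p := u$ yields the required representative: it has degree $\leq m-1$, components in $\sobzero(M)$, and by construction $f = Xp+ap-Sp$. The main obstacle throughout is the boundary uniqueness of the raising/lowering Cauchy--Riemann operators on simple surfaces used in the top-mode step; everything else is structural bookkeeping around Theorem \ref{thm:fwd} and the attenuated tensor tomography kernel characterization invoked for Theorem \ref{thm:main1}.
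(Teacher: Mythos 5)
Your proof is correct, and the ``if'' direction coincides with the paper's (rewrite the equation for $u-p$ and invoke forward uniqueness, so $u=p$ and the outgoing trace vanishes). In the ``only if'' direction you use the same deep ingredient as the paper --- the attenuated tensor tomography result on simple surfaces, i.e.\ Theorem \ref{lem:onedown}, equivalently the kernel characterization behind Theorem \ref{thm:KMM} --- but you organize the degree descent differently. The paper applies Theorem \ref{lem:onedown} repeatedly: if $n=\deg k>m$, the right-hand side $Su+f$ has degree $n$, so $u$ has degree $n-1$, hence so does $Su$, and one iterates until the right-hand side has degree $m$, at which point a final application gives $\deg u\le m-1$ and one sets $p=u$. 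You instead invoke the deep result only once, to conclude that $u$ has finite degree with components in $\sobzero(M)$, and then lower the degree by hand: if $d=\deg u\ge m$, projecting $(X+a-S)u=f$ onto the modes $e^{\pm i(d+1)\theta}$ leaves only $\eta_+u_{(d)}=0$ and $\eta_-u_{(-d)}=0$ (since $a$ and $S$ preserve modes and $f_{\pm(d+1)}=0$), and a solution of a Cauchy--Riemann-type equation whose coefficient lies in $W^{1,2}_0(M)$ vanishes identically (zero extension plus unique continuation), a contradiction. This elementary top-mode argument is legitimate precisely because your first step already secured finite degree and the $\sobzero$ regularity of the Fourier coefficients --- the infinite-harmonic-content situation is where the machinery of \cite{Salo2011,Paternain2011a,Krishnan2018} is genuinely indispensable --- so your route confines the heavy theorem to a single use, keeps the rest self-contained, and makes explicit the $\sobzero$ regularity of $p=u$ that the paper leaves implicit; the paper's route is shorter to state given the quoted theorem. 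One streamlining: you may apply Theorem \ref{lem:onedown} directly to $u$ with source $Su+f$, rather than passing through $I_ah=0$, the kernel representative $q$, and the identification $q=u$, since that identification requires uniqueness for $(X+a)w=0$, $w|_{\Gamma_-}=0$ in the $W^{1,2}_X$ class --- true, but an extra verification your detour forces you to make.
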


\begin{remark} In both theorems above, the smoothness assumption on $a$ is mainly due to Theorem \ref{thm:KMM} below, used in our main results. Smoothness on $a$ is required by microlocal arguments in \cite{Salo2011} in order to construct holomorphic integrating factors for attenuated X-ray transforms. However, it is expected that the same results hold using less regularity on $a$, e.g. $a\in C^0(M)$ is expected to suffice.    
\end{remark}

\paragraph{Outline.} The remainder of the article is structured as follows. We first describe in Sec. \ref{sec:forward} the forward theory that is necessary to properly define the measurement operator $\M_{a,k}$. This includes an $L^2$ treatment of trace issues (\S \ref{sec:spacestraces}) and forward solvability, followed by the proof of Theorem \ref{thm:fwd} (\S \ref{sec:pfTh1}). Section \ref{sec:reconstruction} then covers the main 'inverse' results, starting with geometric preliminaries (\S \ref{sec:prelims}) and recalls from attenuated tensor tomography (\S \ref{sec:atRt}), followed by the proofs of Theorems \ref{thm:main1} (\S \ref{sec:main1}) and \ref{thm:main2} (\S \ref{sec:main2}).

\section{Forward mapping properties} \label{sec:forward}

The main purpose of this section is to prove Theorem \ref{thm:fwd}, providing along the way a self-contained $L^2$ theory of transport in our setting. The forward results will be formulated for indices of refraction such that the domain is strictly convex, non-trapping (in other words, assuming the absence of conjugate points is not necessary in this section). 

\subsection{Spaces and traces} \label{sec:spacestraces}

We denote $\|\cdot\|$ the norm in $L^2(SM)$, and define the Hilbert space
\begin{align*}
    W^{1,2}_X (SM) := \{ u\in L^2(SM), \quad Xu \in L^2(SM) \}, \qquad \|u\|^2_{2,X} := \|u\|^2 + \|Xu\|^2.
\end{align*}
Using the strict convexity of the boundary, \cite[Lemma 4.1.2]{Sharafudtinov1994} implies that there exists a constant $C_0$ such that 
\begin{align}
    \tau(\x,v) \le C_0\ \mu(\x,v), \qquad \forall (\x,v)\in \Gamma_-.
    \label{eq:C0}
\end{align}
Green's formula for $X$ reads
\begin{align}
    \int_{SM} Xu\ d\Sigma^3 = \int_{\partial SM} u\ \mu\ d\Sigma^2, \qquad u\in C^\infty(\overline{SM}), 
    \label{eq:Green}
\end{align}
and upon applying this to the solution $u$ of $Xu = -f$ with $u|_{\Gamma_-} = 0$, we obtain Santal\'o's formula
\begin{align}
    \int_{SM} f\ d\Sigma^3 = \int_{\Gamma_+} \int_{-\tau(\x,-v)}^0 f(\varphi_t(\x,v))\ dt\ \mu\ d\Sigma^2.
    \label{eq:Santalo}
\end{align}

\begin{lemma}\label{lem:trace}
    Suppose $(M,g)$ strictly convex and non-trapping, and let $C_0>0$ be the constant in \eqref{eq:C0}. The operator $T \colon C^\infty(\overline{SM}\backslash \Gamma_0) \to C^\infty(\Gamma_+)$ defined by 
    \begin{align*}
	Tu(\x,v) := u(\x,v) - u (\varphi_{-\tau(\x,-v)}(\x,v)), \qquad (\x,v) \in \Gamma_+,
    \end{align*}
    extends as a bounded map $T\colon W_X^{1,2}(SM) \to L^2(\Gamma_{+})$, with norm at most $\sqrt{C_0}$.
\end{lemma}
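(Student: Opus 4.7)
The plan is to first establish the bound $\|Tu\|_{L^2(\Gamma_+)} \le \sqrt{C_0}\,\|Xu\|_{L^2(SM)}$ on smooth functions, where the right-hand side is in turn bounded by $\sqrt{C_0}\,\|u\|_{2,X}$, and then extend by density to $W^{1,2}_X(SM)$.

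Fix $u \in C^\infty(\overline{SM}\setminus\Gamma_0)$ and $(\x,v)\in\Gamma_+$. Writing $\gamma(t) := \varphi_t(\x,v)$, the fundamental theorem of calculus along the geodesic yields
\begin{align*}
    Tu(\x,v) = u(\gamma(0)) - u(\gamma(-\tau(\x,-v))) = \int_{-\tau(\x,-v)}^0 Xu(\varphi_t(\x,v))\,dt.
\end{align*}
Cauchy--Schwarz then gives
\begin{align*}
    |Tu(\x,v)|^2 \le \tau(\x,-v) \int_{-\tau(\x,-v)}^0 |Xu(\varphi_t(\x,v))|^2\,dt.
\end{align*}
Integrating over $\Gamma_+$ with respect to $d\Sigma^2$ and using the strict-convexity bound \eqref{eq:C0} applied to $(\x,-v)\in\Gamma_-$, so that $\tau(\x,-v) \le C_0\,\mu(\x,v)$ on $\Gamma_+$, I obtain
\begin{align*}
    \|Tu\|_{L^2(\Gamma_+)}^2 \le C_0 \int_{\Gamma_+} \int_{-\tau(\x,-v)}^0 |Xu(\varphi_t(\x,v))|^2\,dt\ \mu\,d\Sigma^2 = C_0\,\|Xu\|_{L^2(SM)}^2,
\end{align*}
where the last equality is Santal\'o's formula \eqref{eq:Santalo} applied to $|Xu|^2\in L^1(SM)$. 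This proves the desired bound for smooth $u$.

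To extend, I use that $C^\infty(\overline{SM})\subset W^{1,2}_X(SM)$ is dense: given $u\in W^{1,2}_X(SM)$, approximate by $u_n\in C^\infty(\overline{SM})$ with $u_n\to u$ in the graph norm (a standard convolution/mollification argument in the isothermal chart, since $X$ is a smooth first-order differential operator). The estimate above applied to $u_n - u_m$ shows $\{Tu_n\}$ is Cauchy in $L^2(\Gamma_+)$, and its limit $Tu$ is independent of the approximating sequence and satisfies the same bound with constant $\sqrt{C_0}$. The only mildly delicate point is this density step; the strict convexity and non-trapping hypotheses ensure $\Gamma_0$ has measure zero in $\partial SM$ and can be safely ignored, so no obstruction arises from the glancing boundary.
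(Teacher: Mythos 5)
Your argument is essentially identical to the paper's proof: the same fundamental-theorem-of-calculus identity, Cauchy--Schwarz, the convexity bound $\tau(\x,-v)\le C_0\,\mu(\x,v)$ from \eqref{eq:C0}, and Santal\'o's formula \eqref{eq:Santalo}, followed by extension by density. Your extra remarks on the density step and the negligibility of $\Gamma_0$ merely spell out what the paper leaves implicit, so the proposal is correct and takes the same route.
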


\begin{proof} 
    We prove it for smooth $u$ and extend by density. For such an $u$ and for all $(\x,v)\in \Gamma_+$, the Fundamental Theorem of Calculus gives
    \begin{align*}
	Tu(\x,v) = u(\x, v) - u (\varphi_{-\tau(\x,-v)}(\x,v)) = \int_{-\tau(\x,-v)}^0 Xu (\varphi_t(\x,v))\ dt.
    \end{align*}
    By the Cauchy-Schwarz inequality, we have
    \begin{align*}
	|Tu(\x,v)|^2 \le \tau(\x,-v) \int_{-\tau(\x,-v)}^0 |Xu|^2(\varphi_t(\x,v))\ dt. 
    \end{align*}
    Therefore, we obtain
    \begin{align*}
	\int_{\Gamma_+} |Tu(\x,v)|^2\ d\Sigma^2 &\le \int_{\Gamma_+} \tau(\x,-v) \int_{-\tau(\x,-v)}^0 |Xu|^2(\varphi_t(\x,v))\ dt\ d\Sigma^2 \\
	&\stackrel{\eqref{eq:C0}}{\le} C_0 \int_{\Gamma_+} \int_{-\tau(\x,-v)}^0 |Xu|^2(\varphi_t(\x,v))\ dt\ \mu\ d\Sigma^2 \stackrel{\eqref{eq:Santalo}}{=} C_0 \|Xu\|^2,
    \end{align*}
    hence the result.     
\end{proof}

\begin{remark}\label{rem:notrace}
    Lemma \ref{lem:trace} does not imply that the trace operators $u|_{\Gamma_\pm}$ separately extend in a similar fashion\footnote{See also \cite[\S\ 'Orientation', p. 219]{Dautray6} where the same claim is made in the case of constant refractive index.}, let alone with the (larger) target space $L^2_\mu(\Gamma_\pm)$. Indeed, on any convex, non-trapping domain, consider the exit time function $\tau\colon \Gamma_- \to \Rm_+$, and denote $\tau_\psi$ its extension by constancy along the flow $\varphi_t$, in particular, $\tau_\psi\in C^\infty(\overline{SM})\backslash \Gamma_0$ and $X \tau_\psi =0$. For any $\eta\in \Rm$, we consider the quantities 
    \begin{align*}
	\|(\tau_\psi)^\eta \|^2_{L^2_\mu(\Gamma_-)} = \int_{\Gamma_-} \tau^{2\eta} \mu\ d\Sigma^2, \qquad \|(\tau_\psi)^\eta\|^2_{W_X^{1,2}(SM)} = \int_{\Gamma_-} \tau^{2\eta+1} \mu\ d\Sigma^2,
    \end{align*}
where the second quantity is computed using \eqref{eq:Santalo}. The first quantity is finite iff $2\eta + 1>-1$ (or $\eta>-1$) and the second is finite iff $2\eta+2>-1$ (or $\eta>-3/2$). Thus for any $\eta\in (-3/2,-1]$, the function $(\tau_\psi)^{\eta}$ belongs to $W_X^{1,2}(SM)$ but $(\tau_\psi)^{\eta}|_{\Gamma_-}$ does not belong to $L^2_\mu(\Gamma_-)$.
\end{remark}

\paragraph{Traces.} In spite of Remark \ref{rem:notrace}, traces do extend continuously on $L^2(K, d\Sigma^2)$ for $K$ any compact subset of $\Gamma_\pm$. 

\begin{lemma}\label{lem:Ktrace} Suppose $(M,g)$ is strictly convex and non-trapping. Let $K$ be a compact subset of $\Gamma_+$ (resp. $\Gamma_-$). Then the trace mapping $C^\infty(\overline{SM}) \ni u\mapsto u|_K \in C^\infty(K)$ extends by continuity to a continuous mapping of $W_X^{1,2}(SM) \to L^2(K)$.     
\end{lemma}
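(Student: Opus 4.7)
The plan is to bound $\|u|_K\|_{L^2(K)} \le C \|u\|_{2,X}$ for $u \in C^\infty(\overline{SM})$ and then extend by density. The key difference from Lemma \ref{lem:trace} is that here one must estimate a single trace rather than the difference of traces at the two endpoints of a geodesic, so there is no endpoint cancellation available to trade the boundary term against a volume term; the compactness hypothesis on $K$ is what compensates. Specifically, since $K$ is compact in $\Gamma_+ \subset \partial SM \setminus \Gamma_0$, both $\mu$ and the backward exit time $\tau(\cdot,-\cdot)$ admit positive lower bounds $\mu_0,\tau_0 > 0$ on $K$ by continuity; the latter uses strict convexity of $\partial M$, which ensures that $\tau$ is continuous on $\overline{SM} \setminus \Gamma_0$.

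For smooth $u$ and $(\x,v) \in K$, I would use the fundamental theorem of calculus along the backward flow to write, for every $s \in (0, \tau_0]$,
\begin{align*}
u(\x,v) = u(\varphi_{-s}(\x,v)) + \int_{-s}^0 Xu(\varphi_t(\x,v))\, dt,
\end{align*}
then apply $(a+b)^2 \le 2a^2 + 2b^2$ together with Cauchy--Schwarz to obtain the pointwise bound
\begin{align*}
|u(\x,v)|^2 \le 2 |u(\varphi_{-s}(\x,v))|^2 + 2\tau_0 \int_{-\tau_0}^0 |Xu(\varphi_t(\x,v))|^2\, dt.
\end{align*}
Averaging in $s \in (0, \tau_0)$ converts the $s$-dependent first term into a tube integral $\frac{1}{\tau_0}\int_{-\tau_0}^0 |u(\varphi_t(\x,v))|^2\, dt$, and integrating the resulting inequality over $K$ against $d\Sigma^2$ (after inserting the factor $\mu/\mu_0 \ge 1$) produces
\begin{align*}
\int_K |u|^2\, d\Sigma^2 \le \frac{C(\tau_0)}{\mu_0} \int_K \int_{-\tau_0}^0 \left( |u|^2 + |Xu|^2 \right)(\varphi_t(\x,v))\, dt\, \mu\, d\Sigma^2.
\end{align*}
Since $\tau_0 \le \tau(\x,-v)$ on $K$ and the integrands are nonnegative, the inner integration may be extended to $[-\tau(\x,-v), 0]$, at which point Santal\'o's formula \eqref{eq:Santalo} controls the right-hand side by $\frac{C(\tau_0)}{\mu_0}(\|u\|^2 + \|Xu\|^2)$, establishing the claim for smooth $u$.

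The final step is to extend from $C^\infty(\overline{SM})$ to all of $W_X^{1,2}(SM)$ by appealing to the standard density of smooth functions in $W_X^{1,2}(SM)$, which follows from mollification on the smooth compact manifold-with-boundary $SM$. The main delicacy lies in establishing the positive lower bound $\tau_0 > 0$ on $K$; once this is secured, the rest is a routine combination of the fundamental theorem of calculus along flow lines and Santal\'o's formula, closely paralleling the argument used in Lemma \ref{lem:trace}.
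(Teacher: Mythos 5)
Your proof is correct, but it takes a genuinely different route from the paper. The paper reduces the statement to Lemma \ref{lem:trace}: it builds a cutoff $\ell = h_\psi p \in C^\infty(\overline{SM})$ equal to $1$ on $K$ and vanishing near $\Gamma_-$ (by extending a boundary cutoff $h$ by flow-invariance and cutting off in the normalized time variable $\tau(\x,v)/(\tau(\x,v)+\tau(\x,-v))$), so that $u\ell$ has vanishing incoming trace, $T(u\ell)=u\ell|_{\Gamma_+}$ agrees with $u$ on $K$, and the product rule gives $\int_K|u|^2 \le C_0\|X(u\ell)\|^2 \le C'\|u\|_{W_X^{1,2}}^2$. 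You instead argue directly: compactness of $K\subset\Gamma_+$ gives positive lower bounds $\mu_0$ on $\mu$ and $\tau_0$ on the backward exit time, and then the fundamental theorem of calculus along the backward flow, averaged over the foot point $s\in(0,\tau_0)$ and combined with Santal\'o's formula \eqref{eq:Santalo}, yields the estimate with an explicit constant of order $\max(\tau_0^{-1},\tau_0)/\mu_0$ — which has the merit of making visible exactly how the trace estimate degenerates as $K$ approaches the glancing set $\Gamma_0$, whereas the paper's constant is implicit in $\|\ell\|_\infty,\|X\ell\|_\infty$. Two small points to tidy: before invoking Santal\'o you should enlarge the outer integral from $K$ to $\Gamma_+$ (harmless, since the integrand is nonnegative); and the density of $C^\infty(\overline{SM})$ in $W_X^{1,2}(SM)$ is a Friedrichs-type fact requiring some care at the boundary rather than bare mollification — though the paper invokes the same density without proof in Lemma \ref{lem:trace}, so you are on equal footing there.
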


\begin{proof} We treat the case where $K\subset \Gamma_+$, as the case $K\subset \Gamma_-$ is deduced from it by considering the function $u(\x,-v)$. Fix $K$ a compact subset of $\Gamma_+$, and let $h\in C_c^\infty(\Gamma_+)$ a non-negative function equal to $1$ on $K$. We extend $h$ by constancy along the flow $\varphi_t$, into a function $h_\psi \in C^\infty(\overline{SM})$, vanishing in a neighborhood of $\Gamma_0$. %\F{say more ?}
    Also define $\chi \in C^\infty([0,1], [0,1])$ equal to $1$ on $[0,1/3]$ and $0$ on $[2/3,1]$. Then the function $\overline{SM} \ni (\x,v) \mapsto p(\x,v) := \chi \left( \frac{\tau(\x,v)}{ \tau(\x,v) + \tau(\x,-v)} \right)$ is smooth away from $\Gamma_0$, thus the product $\ell:= h_\psi p$ defines a smooth function on $\overline{SM}$. Now let $u\in C^\infty(\overline{SM})$. Then $u\ell$ vanishes on $\Gamma_-$ so that $T(u\ell) = u\ell|_{\Gamma_+}$, and $u\ell|_{\Gamma_+}$ agrees with $u$ on $K$. We now bound
    \begin{align*}
	\int_K |u|^2\ d\Sigma^2 \le \int_{\Gamma_+} |u|^2 \ell^2\ d\Sigma^2 = \int_{\Gamma_+} |T(u\ell)|^2\ d\Sigma^2.
    \end{align*}
    From the proof of Lemma \ref{lem:trace}, the latter right-hand side is bounded above by $C_0 \|X (u\ell)\|^2$, which is in turn bounded by $C' \|u\|_{W_X^{1,2}(SM)}^2$, where $C'$ only depends on $C_0$ and the compact set $K$. Lemma \ref{lem:Ktrace} is proved.    
\end{proof}

\begin{definition} \label{def:DX}
    For $u\in W_X^{1,2}(SM)$, we write $u|_{\Gamma_+} = 0$ (resp. $\Gamma_-$) if $u|_K = 0$ for every compact set $K\subset \Gamma_+$ (resp. $\Gamma_-$). 
\end{definition}
The conditions above imply in particular the almost-everywhere vanishing of $u$ on $\Gamma_+$ or $\Gamma_-$, and the linear subspaces $W^{1,2}_{X,\pm}(SM)$ of $W^{1,2}_X(SM)$ defined by 
\begin{align}
    W^{1,2}_{X,\pm} = \{ u\in L^2(SM), \quad Xu \in L^2(SM), \quad u|_{\Gamma_\pm} = 0 \},
    \label{eq:Wspaces}
\end{align}
are natural for considering traces. 
\begin{lemma}\label{lem:traces2}
    Let $T$ as defined in Lemma \ref{lem:trace}. For every $u\in W_{X,-}^{1,2}(SM)$, $Tu(\x,v) = u(\x,v)$ for almost every $(\x,v)\in \Gamma_+$. From the properties of $T$, we can define $u|_{\Gamma_+} \in L^2(\Gamma_+)$, with the continuity estimate
    \begin{align*}
	\|u|_{\Gamma_+}\|^2_{L^2(\Gamma_+)} \le C_0 \|Xu\|^2 \le C_0 \|u\|_{W^{1,2}_X(SM)}^2.
    \end{align*}
\end{lemma}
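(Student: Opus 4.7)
The plan is to establish $Tu(\x,v) = u(\x,v)$ for a.e.\ $(\x,v)\in \Gamma_+$ whenever $u \in W^{1,2}_{X,-}(SM)$; granted this identification, the norm bound follows immediately from Lemma \ref{lem:trace} by setting $u|_{\Gamma_+} := Tu$. The key geometric object is the \emph{scattering relation} $\alpha \colon \Gamma_+\setminus \Gamma_0 \to \Gamma_-\setminus\Gamma_0$, defined by $\alpha(\x,v) := \varphi_{-\tau(\x,-v)}(\x,v)$; under strict convexity and non-trapping it is a diffeomorphism between the non-glancing parts of $\Gamma_+$ and $\Gamma_-$. For any smooth $v \in C^\infty(\overline{SM})$, the definition of $T$ tautologically reads $Tv = v|_{\Gamma_+} - (v|_{\Gamma_-}) \circ \alpha$ on $\Gamma_+\setminus\Gamma_0$.

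I would then approximate $u$ by a sequence $u_n \in C^\infty(\overline{SM})$ in $W^{1,2}_X(SM)$; by Lemma \ref{lem:trace}, $Tu_n \to Tu$ in $L^2(\Gamma_+)$. Fix any compact $K \subset \Gamma_+\setminus\Gamma_0$; its image $K' := \alpha(K)$ is compact in $\Gamma_-\setminus\Gamma_0$ by continuity of $\alpha$ there. Lemma \ref{lem:Ktrace} yields $u_n|_K \to u|_K$ in $L^2(K)$ and $u_n|_{K'} \to u|_{K'} = 0$ in $L^2(K')$, the latter vanishing being the very content of $u\in W^{1,2}_{X,-}$ via Definition \ref{def:DX}. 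Since $K$ stays at positive distance from $\Gamma_0$, the Jacobian of $\alpha|_K$ with respect to $d\Sigma^2$ is bounded above and below, so the pullback $(u_n|_{K'})\circ\alpha|_K \to 0$ in $L^2(K)$. Passing to the limit in the smooth identity gives $Tu|_K = u|_K$ in $L^2(K)$.

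Finally, exhausting $\Gamma_+\setminus\Gamma_0$ by such compact sets and using that $\Gamma_0$ has $d\Sigma^2$-measure zero, this patches to $Tu = u|_{\Gamma_+}$ a.e.\ on $\Gamma_+$, which both defines $u|_{\Gamma_+}$ as an $L^2(\Gamma_+)$-function and yields the stated continuity. The main obstacle I anticipate is bookkeeping the pullback-by-$\alpha$ step cleanly: it rests on smoothness of $\alpha$ away from $\Gamma_0$ (a standard consequence of strict convexity and non-trapping) together with a uniform Jacobian bound on each compact $K$, which I would either invoke as a standard fact from geometric scattering theory or verify by a direct local-coordinate computation near $K$.
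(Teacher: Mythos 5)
Your proof is correct and follows essentially the same route as the paper's: the paper's (much terser) argument likewise rests on the scattering relation $\alpha(\x,v)=\varphi_{-\tau(\x,-v)}(\x,v)$ being a diffeomorphism, so that $u|_{\Gamma_-}=0$ a.e.\ forces $u|_{\Gamma_-}\circ\alpha=0$ a.e.\ and hence $Tu=u|_{\Gamma_+}$. Your density argument with compact exhaustion away from $\Gamma_0$ simply makes explicit the approximation details the paper leaves implicit, and the estimate then follows from (the proof of) Lemma \ref{lem:trace} exactly as you say.
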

One can similarly define a trace operator $W_{X,+}^{1,2} (SM) \to L^2(\Gamma_-)$. 
\begin{proof}
    Since $u|_{\Gamma_-} = 0$ a.e., and $\alpha\colon \Gamma_+\to \Gamma_-$ defined by $\alpha(\x,v) := \varphi_{-\tau(\x,-v)}(\x,v)$ is a diffeomorphism\footnote{$\alpha$ is also called the {\em scattering relation} in other contexts.}, then $u|_{\Gamma_-}\circ \alpha = 0$ a.e., hence $Tu$ agrees almost everywhere with $u|_{\Gamma_+}$. 
\end{proof}

\subsection{Proof of Theorem \ref{thm:fwd}} \label{sec:pfTh1}

We view $X$ as an operator with domain $D(X) = W_{X,-}^{1,2}(SM)$. The operator $X$ is accretive: indeed we have for any $u\in D(X)$,
\begin{align*}
    2\text{Re} (Xu,u) &= \int_{SM} ((Xu) \overline{u} + u X \overline{u})\ d\Sigma^3\\
    &= \int_{SM} X(|u|^2)\ d\Sigma^3 \stackrel{\eqref{eq:Green}}{=} \int_{\Gamma_+} |u|_{\Gamma_+}|^2 \mu\ d\Sigma^2 \ge 0.
\end{align*}
The operator $X$ can also easily be seen to be closed. Using Equation \eqref{eq:Green}, the adjoint of $X$ is given by $X^* = -X$, with domain $D(X^*) = W_{X,+}^{1,2}(SM)$, and the calculation
\begin{align*}
    2 \text{Re} (X^* u, u) = - \int_{SM} X (|u|^2)\ d\Sigma^3 = \int_{\Gamma_-} |u|_{\Gamma_-}|^2 |\mu|\ d\Sigma^2\ge 0,
\end{align*}
also shows that $X^*$ is accretive. By \cite[Th.8 p.340]{Dautray5}, the operator $X$ is maximal accretive.

We first show that given $f\in L^2(SM)$, problem \eqref{eq:boltzmann} has a unique solution in $D(X) = W^{1,2}_{X,-}(SM)$ as defined in \eqref{eq:Wspaces}. 

Define the operator $Q\colon L^2(SM) \to L^2(SM)$ by $Q u := a u - Su$. This operator is bounded, self-adjoint, and its operator norm is at most $Q_\infty$ as defined in the statement of the theorem. Moreover, using the subcriticality condition \eqref{eq:subcrit}, we show that 
\begin{align}
    (Qu,u) \ge \delta\|u\|^2.
    \label{eq:Q}
\end{align}
\begin{proof}[Proof of \eqref{eq:Q}] We compute
    \begin{align*}
	(Qu, u) &= \int_M \left[ a(\x) \int_{S^1} |u(\x,\theta)|^2\ d\theta - \int_{S^1\times S^1} k(\x,\theta-\theta') u(\x,\theta) u(\x,\theta')\ d\theta\ d\theta' \right]\ d\x \\
	&\stackrel{\eqref{eq:subcrit}}{\ge} \delta \|u\|^2 + \int_M \left[ k_0(\x) \int_{S^1} |u(\x,\theta)|^2\ d\theta - \int_{S^1\times S^1} k(\x,\theta-\theta') u(\x,\theta) u(\x,\theta')\ d\theta\ d\theta' \right]\ d\x.
    \end{align*}
    To show that the last term is non-negative is a direct consequence of the Cauchy-Schwarz inequality. Indeed, 
    \begin{align*}
	\int_M \int_{S^1\times S^1} &k(\x,\theta-\theta') u(\x,\theta) u(\x,\theta')\ d\theta\ d\theta'\ d\x  \\
	&\le  \left( \int_M \int_{S^1\times S^1} \!\!\!\!\! k(\x, \theta-\theta') |u(\x,\theta)|^2\ d\theta\ d\theta'\ d\x \right)^{\frac{1}{2}}  \left( \int_M \int_{S^1\times S^1} \!\!\!\!\! k(\x, \theta-\theta') |u(\x,\theta')|^2\ d\theta\ d\theta'\ d\x \right)^{\frac{1}{2}} \\
	&= \int_M k_0(\x) \int_{S^1} |u(x,\theta)|^2 \ d\theta\ d\x,
    \end{align*}
    hence \eqref{eq:Q} holds.     
\end{proof}
From \eqref{eq:Q} we deduce that the operator $Q - \delta Id$ is accretive. Then, the operator $X + Q - \delta Id$, with domain of definition $D(X)$ is closed accretive, and its adjoint $(X+Q - \delta Id)^* = -X + Q - \delta Id$, with domain of definition $D(X^*)$ is also accretive. Thus by \cite[Th. 8 p.340]{Dautray5}, the operator $X+Q-\delta Id$ is maximal accretive, and therefore $-(X+Q-\delta Id)$ generates a contraction semigroup of class $C^0$. By \cite[Prop. 1 p.321]{Dautray5}, this implies that for all $\lambda>0$, $(X+Q-\delta Id + \lambda Id) (D(X)) = L^2(SM)$, with resolvent estimate $\|(X+Q-\delta Id + \lambda Id)^{-1}\|_{{\cal L}(L^2(SM))}\le \frac{1}{\lambda}$. In particular, for $\lambda = \delta$, we obtain that there exists a unique solution $u\in L^2(SM)$ such that $Xu + Qu = f$ with $u|_{\Gamma_-} = 0$, and with estimate $\|u\|_{L^2(SM)} \le \frac{1}{\delta} \|f\|_{L^2(SM)}$.

From the transport problem, we then obtain that $Xu = - Qu + f$, whence
\begin{align*}
    \|Xu\| \le \|Qu\| + \|f\| \le Q_\infty \|u\| + \|f\| \le \left(\frac{Q_\infty}{\delta} + 1\right) \|f\|,
\end{align*}
and hence $u\in D(X)$. To conclude, the trace estimate of Lemma \ref{lem:traces2} gives 
\begin{align*}
    \|u|_{\Gamma_+}\|_{L^2(\Gamma_+)} \le \sqrt{C_0} \|Xu\| \le \sqrt{C_0} \left(\frac{Q_\infty}{\delta} + 1\right) \|f\|,
\end{align*}
and this completes the proof of Theorem \ref{thm:fwd}.

\section{Reconstruction} \label{sec:reconstruction}

\subsection{Preliminaries} \label{sec:prelims}

\paragraph{Geometry of $SM$.}

The geodesic vector field $X$ defined in \eqref{eq:X} can be completed into a natural frame of $T(SM)$ by considering the other two vector fields $V:= \partial_\theta$ and $X_\perp$ as defined in \eqref{eq:Xperp}. The frame $\{X,X_\perp, V\}$ has structure equations
\begin{align*}
    [X,V] = X_\perp, \qquad [X_\perp,V] = -X, \qquad [X,X_\perp] = -\kappa(\x)V,
\end{align*}
where $\kappa(\x) := c^2(\x) (\partial_x^2 + \partial_y^2) \log c$ is the Gaussian curvature of the metric $g = c^{-2} \text{id}$. 

\paragraph{Fourier analysis on $SM$.} Recall the Fourier decomposition $L^2(SM) = \oplus_{k\ge 0} H_k$ described Sec. \ref{sec:notation}, with for $k\ge 1$, $H_k = E_k \oplus E_{-k}$. We also denote $\Omega_k = H_k \cap C^\infty(SM)$ for all $k\ge 0$ and $\Lambda_k = E_k \cap C^\infty(SM)$ for all $k\in \Zm\backslash\{0\}$. The free transport operator $X$ can be rewritten as $X = X_+ + X_-$ (see e.g. \cite{Paternain2015}), where $X_\pm (\sob (H_k)) \subset H_{k\pm 1}$ for all $k\ge 0$ with the convention $H_{-1} =\{0\}$. Componentwise, 
\begin{align*}
    X_+ u_k &= \eta_+ u_{k,+} + \eta_- u_{k,-}, \qquad (k\ge 0), \\
    X_- u_k &= \eta_- u_{k,+} + \eta_+ u_{k,-}, \qquad (k >0), \qquad X_- u_0 = 0, \qquad (k=0),
\end{align*}
where we have defined the operators $\eta_{\pm} := (X\pm i X_\perp)/2$ with the property that $\eta_\pm (\Lambda_k) \subset \Lambda_{k\pm 1}$ for all $k\in \Zm$. In coordinates $(\x,\theta)$,
\begin{align*}
    \eta_+ = e^{i\theta} (c(\x) \partial - i (\partial c) \partial_\theta), \qquad \eta_- = \overline{\eta_+}, \qquad \text{where } \quad \partial := \frac{1}{2} (\partial_x - i \partial_y). 
\end{align*}
In what follows, we also denote 
\begin{align*}
    \ker^k \eta_\pm := \Lambda_k \cap \ker \eta_\pm, \qquad k\in \Zm,
\end{align*}
as well as $L^2(\ker^k \eta_\pm)$ the $L^2$ version of it, a closed subspace of $E_k$ as explained in \cite[Sec. 7.1]{Assylbekov2017}. Of special interest will be, for $k\ge 1$ the spaces $\ker^k \eta_- \oplus \ker^{-k} \eta_+$. For $k\ge 2$, such spaces correspond to trace-free, divergence-free tensors of order $k$, while for $k=1$, they correspond to harmonic one-forms. The $L^2$ version will be denoted 
\begin{align}
    \hd_k (M):= L^2(\ker^k \eta_-) \oplus L^2(\ker^{-k} \eta_+), \qquad k\ge 1.
    \label{eq:hdelta}
\end{align}
We have $\hd_k(M) = \{f\in H_k,\ X_- f = 0\}$ for all $k\ge 2$, while for $k = 1$, $\hd_1(M) \subsetneq \{f\in H_1,\ X_- f = 0\}$. 

The relevance of $\hd_k$ comes from the following fact: 

\begin{lemma}\label{lem:decomp}
    Let $k\ge 1$. For any $u \in H_k$, there exists a unique $v\in \sobzero (H_{k-1})$ and $g\in \hd_k$ such that
    \begin{align*}
	u = X_+ v + g.
    \end{align*}
    The decomposition is $L^2(SM)$-orthogonal.
\end{lemma}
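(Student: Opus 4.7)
My plan is to obtain the decomposition via Lax--Milgram applied to the Hermitian bilinear form $B(v,w) := (X_+ v, X_+ w)_{L^2(SM)}$ on the Hilbert space $\sobzero(H_{k-1})$, with the continuous linear functional $L(w) := (u, X_+ w)_{L^2(SM)}$. The unique $v$ produced by Lax--Milgram will yield the decomposition via $g := u - X_+ v$, provided I can verify that $g \in \hd_k$.

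I would first dispatch the orthogonality of the two summands, which is the easy direction. For $v \in \sobzero(H_{k-1})$ and $g \in \hd_k \subset \ker X_-|_{H_k}$, Green's formula \eqref{eq:Green} for $X$ (with vanishing boundary term because $v|_{\partial M} = 0$), combined with the mode-shift property $X_\pm \colon H_j \to H_{j\pm 1}$, gives $(X_+ v, g)_{L^2(SM)} = -(v, X_- g)_{L^2(SM)} = 0$. This observation also shows that any decomposition of the claimed form is automatically $L^2$-orthogonal.

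The heart of the proof is a Poincar\'e-type coercivity estimate $\|X_+ v\|^2_{L^2(SM)} \gtrsim \|v\|^2_{\sob(H_{k-1})}$ on $\sobzero(H_{k-1})$. Writing $v = v_+ + v_- = \tilde v_+ e^{i(k-1)\theta} + \tilde v_- e^{-i(k-1)\theta}$, the coordinate expressions for $\eta_\pm$ show that $\|\eta_+ v_+\|^2$ and $\|\eta_- v_-\|^2$ are weighted $L^2$ norms of $\partial$ (resp.\ $\bar\partial$) applied to the conformal rescalings $w_\pm := c^{k-1} \tilde v_\pm$, which inherit Dirichlet boundary conditions. The IBP identity $\int_M |\partial f|^2 = \tfrac{1}{4} \int_M |\nabla f|^2$ valid on complex-valued $f \in \sobzero(M)$ (boundary terms vanish), together with the Poincar\'e inequality, will then deliver the desired lower bound, with constants tracking $\min c$, $\max c$, $\|\nabla c\|_\infty$, and $k$. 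Continuity of $B$ and $L$ is immediate from the first-order nature of $X_+$.

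Lax--Milgram then produces a unique $v \in \sobzero(H_{k-1})$ satisfying $(u - X_+ v, X_+ w)_{L^2(SM)} = 0$ for all test $w$; a distributional integration by parts using $w|_{\partial M} = 0$ translates this into $X_- g = 0$ in the sense of distributions, so $g \in H_k \cap \ker X_-$. For $k \ge 2$, this space coincides with $\hd_k$, completing existence; uniqueness is then immediate from the orthogonality of the second paragraph combined with the coercivity. The main obstacle I anticipate is quantifying the coercivity estimate with controlled constants through the conformal $c$-dependence. A second obstacle, specific to $k = 1$, is that the Lax--Milgram residual a priori lies in the strictly larger space $\ker X_-|_{H_1}$; promoting it to $\hd_1$ will require an additional argument, plausibly by treating the $E_{1}$ and $E_{-1}$ components separately via a refined pair of Dirichlet elliptic problems so as to kill each of $\eta_- g_+$ and $\eta_+ g_-$ individually rather than merely their sum.
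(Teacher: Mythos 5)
The paper does not actually prove this lemma (it is recalled as a known fact, cf.\ \cite{Assylbekov2017,Krishnan2018}), so there is nothing to compare line by line; but your variational scheme is the standard route, and for $k\ge 2$ it is correct and complete in outline. Orthogonality via $(X_+v,g)=-(v,X_-g)=0$ is fine (the mode-shift kills the cross terms and $v|_{\partial M}=0$ kills the boundary term), the coercivity reduces, exactly as you say, to $\int_M|\partial w|^2=\tfrac14\int_M|\nabla w|^2$ plus Poincar\'e for the rescalings $w_\pm=c^{k-1}\tilde v_\pm\in\sobzero(M)$, and the constants only need to be finite and positive, not quantified, so the ``main obstacle'' you anticipate is not a real one. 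For $k\ge2$ the Lax--Milgram residual $g$ satisfies $\eta_-g_{k,+}=0$ and $\eta_+g_{k,-}=0$ separately, since these two components live in the distinct frequency bands $E_{k-1}$ and $E_{-(k-1)}$, so $g\in\hd_k$ and you are done.

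The genuine gap is your proposed patch at $k=1$: it cannot work, and the difficulty you flag is not removable by ``a refined pair of Dirichlet problems.'' With a single potential $v\in\sobzero(H_0)$, killing $\eta_-(u_{1,+}-\eta_+v)$ and $\eta_+(u_{1,-}-\eta_-v)$ individually forces $\tfrac14\Delta_g v=\eta_-u_{1,+}$ and $\tfrac14\Delta_g v=\eta_+u_{1,-}$ simultaneously, which is consistent only when $\eta_-u_{1,+}=\eta_+u_{1,-}$, i.e.\ when the one-form $u$ is closed. Indeed $X_+\sobzero(H_0)$ consists of exact one-forms and $\hd_1$ of harmonic (hence closed) ones, so their sum only spans closed forms: on the Euclidean disk the smooth element $u=x\sin\theta-y\cos\theta\in H_1$ (the one-form $x\,dy-y\,dx$) admits no decomposition $u=X_+v+g$ with $g\in\hd_1$, as applying $\dbar$ and $\partial$ to the two components yields $-\tfrac i2=\tfrac i2$. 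So at $k=1$ the statement must be read with $\hd_1$ replaced by the full solenoidal space $\{f\in H_1,\ X_-f=0\}$ --- which is precisely what your Lax--Milgram residual produces, so your construction proves that corrected statement --- or, equivalently, one must allow a second potential $X_\perp w$, $w\in\sobzero(M)$, which is exactly the form $h_0+X_\perp h_\perp+\sum_k h_k$ appearing in Theorem \ref{thm:KMM}. You should therefore not spend effort trying to promote the residual into $\hd_1$; instead prove the lemma as stated for $k\ge2$ and record the modified statement for $k=1$.
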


\subsection{Recalls from attenuated tensor tomography} \label{sec:atRt}

In the case where scattering is absent, the measurement operator $\M_{a,0}f$ is the so-called {\em attenuated X-ray transform}, usually written $I_a f$ and given explicitly by 
\begin{align*}
    I_a f(\x,v) = \int_{-\tau(\x,-v)}^0 f(\varphi_t(\x,v)) \exp \left( - \int_t^{0} a(\varphi_s(\x,v))\ ds \right) \ dt, \quad (\x,v)\in \Gamma_+,
\end{align*}
where $\tau(\x,v)$ denotes the first nonnegative time $t$ at which $\varphi_t(x,v)\in \Gamma_+$. 

A natural question is whether the recovery of $f$ from $I_a f$ is possible. If $f$ is restricted to be of degree $m$ for some $m>0$, this is referred to as the attenuated tensor tomography problem. Fixing a harmonic cutoff number $m$, one may find that the transform $I_a$ restricted to integrands of degree at most $m$ has a natural kernel: if $p$ is of degree $m-1$ with components in $\sob_0(M)$, then $(X+a)p$ has degree $m$ and satisfies $I_a[ (X+a)p] = 0$. The question then consists in assessing whether this is the only obstruction, and if it is, to change the problem as follows: given $f$ of degree $m$, find a representative $h$ of $f$ modulo the kernel of $I_a$ (i.e., $I_a f = I_a h$) and reconstruct it from $I_a f$. 

The answer to this last question was recently provided in \cite{Krishnan2018}, and the summary of Theorems 1 and 2 there may be read as follows: 

\begin{theorem}[Theorems 1, 2 in \cite{Krishnan2018}]\label{thm:KMM} Let $(M,g)$ a simple Riemannian surface with boundary and $a\in C^\infty(M)$. Then any $f\in L^2(SM)$ of degree $m$ admits a unique decomposition
    \begin{align*}
	f = (X+a) p + h,
    \end{align*}
    where $p$ is of degree $m-1$ with components in $\sob_0(M)$. In addition, $h\in L^2(SM)$ is of degree $m$ and of the form
    \begin{align*}
	h = h_0 + X_\perp h_\perp + \sum_{k=1}^m h_k,
    \end{align*}
    with $h_0\in L^2(M)$, $h_\perp \in \sob_0(M)$ and for $k\ge 1$, $h_k\in H_k^{\text{sol}}(M)$.  

    We thus have $I_a f = I_a h$, and $h$ can be uniquely and constructively recovered from $I_a f$.
    
    If $f\in C^\infty(SM)$, then $p,h_0,h_\perp, h_k$ are all smooth.     
\end{theorem}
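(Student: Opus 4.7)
The plan is to leverage two analytic tools available on simple surfaces with smooth attenuation $a$: smooth holomorphic/antiholomorphic integrating factors $w_\pm\in C^\infty(SM)$ satisfying $X w_\pm = -a$ and carrying only non-negative (resp.\ non-positive) Fourier modes in $\theta$, and the Pestov energy identity. The first is due to Salo--Uhlmann and depends essentially on simplicity of $(M,g)$ and smoothness of $a$. The second yields surjectivity of $\eta_\pm$ between suitable Sobolev subspaces with Dirichlet boundary conditions, which underlies the solenoidal--potential decomposition of symmetric tensor fields recorded in Lemma~\ref{lem:decomp}, and also gives injectivity of the attenuated transform $I_a$ on solenoidal tensors. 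Together, these tools will produce both the decomposition $f=(X+a)p+h$ and the constructive inversion of $I_a$ on $h$.

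To build the decomposition, I would proceed by downward induction on the top harmonic. Writing $f=\sum_{k=-m}^m f_k$ with $f_k\in\Lambda_k$, note that $(X+a)$ acting on $H_{m-1}$ with components in $\sobzero(M)$ reaches $H_m$ only through $X_+$. The task at the top harmonic is therefore to find $p_{m-1}\in H_{m-1}$ with components in $\sobzero(M)$ such that $f_m-X_+ p_{m-1}\in\hd_m$, which is precisely the decomposition of Lemma~\ref{lem:decomp} applied to $f_m$. Subtracting $(X+a)p_{m-1}$ from $f$ leaves a residue of degree at most $m-1$, and the argument iterates down to $k=2$. At $k=1$, the complement of $\{X p_0 : p_0\in\sobzero(M)\}$ in $H_1$ is spanned by divergence-free one-forms, which split further as $X_\perp h_\perp$ (co-exact, with $h_\perp\in\sobzero(M)$) plus $h_1\in\hd_1$ (harmonic), yielding the claimed form. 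At $k=0$, the remaining scalar residue is simply declared as $h_0\in L^2(M)$. Uniqueness of the resulting $(p,h)$ follows from uniqueness of the Dirichlet problems at each inductive step combined with injectivity of $I_a$ on the solenoidal class.

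Since $p$ has vanishing boundary trace, an integration along geodesics yields $I_a[(X+a)p]=0$, hence $I_af=I_ah$. For constructive reconstruction of $h$ from $I_a h$, I would invoke the attenuated Pestov--Uhlmann formula on simple surfaces: the boundary data is propagated back into $SM$ as the unique solution $\tilde u$ of $(X+a)\tilde u=-h$ with $\tilde u|_{\Gamma_-}=0$ (so that $\tilde u|_{\Gamma_+}=-I_a h$), and composition with the fiberwise Hilbert transform $H$ on $SM$ together with the commutator identity for $[H,X+a]$ produces explicit projection operators onto the harmonic components of $h$. The holomorphic integrating factor $w_+$ is used here to reduce the formula to the unattenuated case, for which the Pestov--Uhlmann inversion on simple surfaces applies. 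Smoothness when $f\in C^\infty(SM)$ then follows by bootstrapping the elliptic Dirichlet problems solved at each inductive step, together with smoothness of $w_\pm$ and of the operators entering the reconstruction.

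The hard part is the low-degree cases $k=0,1$: the auxiliary piece $X_\perp h_\perp$ must be carefully disentangled from $h_1\in\hd_1$, and the boundary vanishing of the potential $p_0\in\sobzero(M)$ must be reconciled with the inductive reduction coming from $k=1$. The constructive implementation of the reconstruction formula via the integrating factor $w_+$ and the Hilbert transform, in particular the extraction of $h_0$ and $h_\perp$ as distinct objects, also requires careful bookkeeping. These points, together with verification that the intermediate solenoidal and potential parts have the claimed Sobolev regularity up to the boundary, constitute the technical core of the argument in \cite{Krishnan2018}.
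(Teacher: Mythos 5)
First, a point of order: the paper does not prove Theorem~\ref{thm:KMM} at all --- it is imported verbatim from \cite{Krishnan2018}, as the bracketed attribution indicates --- so there is no internal proof to compare against; the only guidance is the remark immediately following the statement. Measured against that and against the cited construction, the first half of your sketch (downward induction on the top harmonic, peeling off $X_+p_{k-1}$ at each level via the orthogonal splitting of Lemma~\ref{lem:decomp}, collecting the $\hd_k$ residues, and using the Hodge-type splitting $f_1 = Xp_0 + X_\perp h_\perp + h_1$ at the bottom) is the right mechanism. One correction there: uniqueness of the pair $(p,h)$ does not require injectivity of $I_a$, and invoking it obscures where the work lies. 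Uniqueness is purely elliptic: if $(X+a)q = h_0+X_\perp h_\perp+\sum_k h_k$ with $q$ of degree $m-1$ and components in $\sobzero(M)$, then matching the top harmonic and using the orthogonality of Lemma~\ref{lem:decomp} forces $q_{m-1}=0$, and descending one eventually reaches $\Delta_g(q_0\pm i h_\perp)=0$ with vanishing Dirichlet data. Injectivity of $I_a$ on the class of admissible $h$'s is what the \emph{final} sentence of the theorem requires, and that is a separate (and harder) matter.

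The genuine gap is in the reconstruction half. You claim that the fiberwise Hilbert transform, the commutator identity for $[H,X+a]$, and the Salo--Uhlmann holomorphic integrating factors \cite{Salo2011} yield explicit projections onto \emph{all} harmonic components of $h$. That machinery recovers only the low-degree part $(h_0, h_\perp)$ (and the vector-field piece), which is the content of \cite{Monard2015,Assylbekov2017}; it does not recover the solenoidal components $h_k$ for $k\ge 1$ of higher angular degree. As the paper's own remark after the theorem makes explicit, that step proceeds from higher to lower angular dependence and relies on the existence of geodesically invariant distributions with prescribed top Fourier mode, whose existence is established by microlocal/normal-operator arguments going back to \cite{Paternain2011a}; a fully constructive version is currently available only on the Euclidean disk \cite{Monard2017a}. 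Your sketch omits this ingredient entirely, so the clause ``$h$ can be uniquely and constructively recovered from $I_a f$'' is not justified by what you wrote. Relatedly, the smoothness hypothesis $a\in C^\infty(M)$ enters precisely through the microlocal construction of these integrating factors and invariant distributions (cf.\ the remark following Theorem~\ref{thm:main2}), a dependence your argument should surface rather than treat as routine.
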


Such a result is the basis of the derivations that follow in the next section. Let us mention that, except for the last statement, the conclusions above should be conjectured to hold for $a\in L^\infty(M)$. 

\begin{remark}
    All of the singular information is contained in $h_0$ and $h_\perp$, whereas the $h_k$'s for $|k|\ge 1$ are smooth in $\x\in M^{\text{int}}$ and can be viewed as residual terms. The reconstruction must (i) take care of the analytic terms first, from higher to lower angular dependence, then (ii) the reconstruction of $(h_0,h_\perp)$ must be carried out. Steps (i) and (ii) are fully explicit in the case of the Euclidean disk \cite{Monard2017a}; in the case of simple surfaces/domains, reconstruction formulas for step (ii) are given in \cite{Assylbekov2017}, while step (i) is described in \cite{Krishnan2018}. For this last case, the reconstruction relies on the existence of invariant distributions whose existence is provided through microlocal arguments, though a fully constructive approach remains to be found.
\end{remark}

\subsection{Injective problems - proof of Theorem \ref{thm:main1}} \label{sec:main1}

We now prove Theorem \ref{thm:main1}. We first treat the case of isotropic scattering as it provides a succinct introduction to the approach. 

\subsubsection{Isotropic scattering} 

Suppose the scattering kernel isotropic, i.e., $k(\x,\theta-\theta') = k_0(\x)$. 

\paragraph{Case (1).} Suppose $f = f_0 + X_\perp f_\perp$. In this case, equation \eqref{eq:boltzmann} reads
\begin{align}
  X u + a(\x) u = k_0 u_0  + f_0 + X_\perp f_\perp, \qquad u|_{\Gamma_-} = 0,
  \label{eq:iso}
\end{align}
and where $u|_{\Gamma_+} = I_a [k_0 u_0 + f_0 + X_\perp f_\perp]$. By injectivity of the attenuated ray transform over sums of functions and solenoidal vector fields, we can recover $k_0 u_0 + f_0$ and $f_\perp$ separately from the data, see e.g. \cite{Assylbekov2017,Monard2017a}. In particular, the right-hand-side $\tilde f$ of \eqref{eq:iso} is fully known, thus we may integrate it to recover $u$ namely through the relation $u(\x,v) = \int_{-\tau(\x,-v)}^0 \tilde f(\varphi_t(\x,v))\ dt$. Since $k_0$ is known, then we finally recover $f_0 = (\tilde f)_0 - k_0 u_0$. 

\paragraph{Case (2).} Suppose $f = f_1 \in H_1$ is a vector field. Then the equation $(X+a)u = Su + f$ provides the data $I_a[k_0 u_0 + f_1]$, a form of integrand over which the attenuated X-ray transform is no longer injective. We thus use the Hodge decomposition to rewrite $f = X \tilde f_0 + X_\perp \tilde f_\perp + \omega_1$ for some $\tilde f_0, \tilde f_\perp \in \sobzero(M)$ and $\omega_1 \in H_1^{\text{sol}}(M)$. The transport equation can be rewritten as
\begin{align}
    (X+a) (u-\tilde f_0) = k_0 u_0 -a\tilde f_0 + X_\perp \tilde f_\perp + \omega_1, \qquad (u-\tilde f_0)|_{\Gamma_-} = 0,
    \label{eq:iso2}
\end{align}
and we deduce by direct integration that $\M_{a,k}(f) = I_a [(k_0 u_0 - a\tilde f_0) + X_\perp \tilde f_\perp + \omega_1]$. The integrand inside $I_a$ can now be uniquely reconstructed, see \cite{Assylbekov2017,Monard2017a}, and this determines, separately,
\begin{align*}
    k_0 u_0 - a \tilde f_0, \qquad \tilde f_\perp, \qquad \omega_1.
\end{align*}
Since the right hand side of \eqref{eq:iso2} is known, this determines $u-\tilde f_0$ uniquely through 
\begin{align*}
    (u-\tilde f_0) (\x,v) = \int_{-\tau(\x,-v)}^0 [k_0 u_0 -a\tilde f_0 + X_\perp \tilde f_\perp + \omega_1](\varphi_t(\x,v)) \ dt
\end{align*}
and in particular, $u_0 - \tilde f_0 = (u-\tilde f_0)_0$ is known. We finally reconstruct $\tilde f_0$ by removing $u_0$ from the two reconstructed functions involving it, through the linear combination: 
\begin{align*}
    \tilde f_0 = \frac{1}{k_0 - a} (k_0 u_0 - a\tilde f_0) - k_0 (u_0 - \tilde f_0).
\end{align*} 
That $k_0-a$ vanishes nowhere precisely comes from the subcriticality condition \eqref{eq:subcrit}. Now that $\tilde f_0$, $\tilde f_\perp$ and $\omega_1$ are all reconstructed, we can recover $f$. 

\subsubsection{Scattering kernel of arbitrary finite degree} 

Now suppose $k$ has degree $m$, of the form
\begin{align*}
    k(\x,\theta) = k_0 (\x) + \sum_{n=1}^m (\tilde k_{n,+}(\x) e^{in\theta} + \tilde k_{n,-}(\x) e^{-in\theta}). 
\end{align*}
Call $u$ the unique solution of the transport problem 
\begin{align*}
    Xu + a(\x) u = f(\x) + k_0 u_0 + \sum_{n=1}^m (\tilde k_{n,+} \tilde u_{n,+} e^{in\theta} + \tilde k_{n,-} \tilde u_{n,-} e^{-in\theta}), \qquad u|_{\Gamma_-} = 0,
\end{align*}
and where $\M_{a,k} f = u|_{\Gamma_+}$ is measured and equals the attenuated X-ray transform of the right-hand side. 

In the last section, Case (1) generated a form of integrand over which the operator $I_a$ was injective, while Case (2) required an additional step exploiting a known gauge of $I_a$. The general case is similar to the latter, since $I_a$ has a non-trivial kernel as soon as one considers integrands of nonzero degree. 

\smallskip
\noindent {\bf Step 1. Moving inside the gauge of the attenuated X-ray transform.}
\smallskip

We first modify this transport equation in such a way that the values at the boundary are not modified, yet where the right-hand side will take a form over which the attenuated transform is injective. Namely, by virtue of Theorem \ref{thm:KMM}, the function of degree $m$ given by $f + Su$ admits a unique decomposition
\begin{align*}
    f + Su = (X+a) p + \tilde f,
\end{align*}
where $p$ is of degree $m-1$ with components in $\sobzero(M)$, and where $\tilde f$ can be uniquely reconstructed from $I_a \tilde f = I_a [f+Su] = \M_{a,k}(f)$, as explained in \cite{Assylbekov2017,Monard2017a,Krishnan2018}. 

\smallskip
\noindent {\bf Step 2. Reconstruction of $f$.}
\smallskip
One may now assume that $\tilde f$ has been reconstructed. The crux is now to show how to reconstruct $f$ from $\tilde f$ and the relations written above. The important equalities are
\begin{align}
    Xu + a u = f + Su = (X+a)p + \tilde f,
    \label{eq:3rels}
\end{align}
where $\tilde f$ is known from $\M_{a,k}(f)$. Subtracting the right side from the left and using that $p|_{\partial SM} = 0$, we obtain the transport problem
\begin{align*}
    X(u-p) + a(u-p) = \tilde f, \qquad (u-p)|_{\Gamma_-} = 0.
\end{align*}
Since the right-hand side is known, this determines $(u-p)$ by direct integration, that is, 
\begin{align*}
    (u-p)(\x,v) = \int_{-\tau(\x,-v)}^0 \tilde f(\varphi_t(\x,v))\ dt, \qquad (\x,v)\in SM.
\end{align*}

Now using the middle and right sides of \eqref{eq:3rels}, we may write
\begin{align}
    (X+a) p - Sp - f = S(u-p) - \tilde f.
    \label{eq:temp2}
\end{align}
The right side is known, and assuming that $f$ is of degree at most $1$, we project the equation above onto the subspaces $H_m, \dots, H_2$ to obtain a triangular system of equations determining $p_{m-1}, \dots, p_1$ in descending order. Namely, since $p_k = 0$ for $k\ge m$ and $p|_{\partial SM} =0$, the projections of \eqref{eq:temp2} are: 
\begin{align*}
    H_m:&\qquad X_+ p_{m-1} = (S (u-p))_m - \tilde f_m, \qquad p_{m-1}|_{\partial M} = 0, \\
    H_{m-1}:&\qquad X_+ p_{m-2} + a p_{m-1} - S p_{m-1} = (S (u-p))_{m-1} - \tilde f_{m-1}, \qquad p_{m-2}|_{\partial M} = 0, \\
    H_{m-2}:&\qquad X_+ p_{m-3} + X_-p_{m-1} + a p_{m-2} - S p_{m-2} = (S (u-p))_{m-2} - \tilde f_{m-2}, \qquad p_{m-3}|_{\partial M} = 0, \\ 
    \vdots & \\
    H_3: &\qquad X_+ p_2 + X_- p_4 + a p_3 - S p_3 = (S (u-p))_{3} - \tilde f_{3}, \qquad p_2|_{\partial M} = 0, \\ 
    H_2: &\qquad X_+ p_1 + X_- p_3 + a p_2 - S p_2 = (S (u-p))_{2} - \tilde f_{2}, \qquad p_1|_{\partial M} = 0. 
\end{align*}
Note that the operator $S$ is block-diagonal on the decomposition \eqref{eq:harmonics} so that $(Sp)_k = S p_k$ for all $k\ge 0$. Each equation above is an elliptic problem for $p_k$ of the form 
\begin{align*}
    X_+ p_k = h_{k+1}, \qquad p_k|_{\partial SM} = 0,
\end{align*}
where the right-hand side $h_{k+1}$ is known. More specifically, the equation above gives two first-order elliptic equations 
\begin{align*}
    \eta_+ p_{k,+} = h_{k+1,+}, \qquad p_{k,+}|_{\partial SM} = 0, \qquad \eta_- p_{k,-} = h_{k+1,-}, \qquad p_{k,-}|_{\partial SM} = 0. 
\end{align*}
We now explain how to recover $p_{k,+}$ as the recovery of $p_{k,-}$ is similar\footnote{In fact, if all functions are real-valued, we immediately have $p_{k,-} = \overline{p_{k,+}}$.}. Writing 
\begin{align*}
    p_{k,+} = e^{ik\theta} \tilde p_{k,+} (\x), \qquad h_{k+1,+} = e^{i(k+1)\theta} \tilde h_{k+1,+}(\x), \qquad \eta_+ = e^{i\theta} (c(\x) \partial - i(\partial c) \partial_\theta),
\end{align*}
the equation $\eta_+ p_{k,+} = h_{k+1,+}$ becomes a $\partial$ equation of the form
\begin{align*}
    c^{1-k}\partial (c^k \tilde p_{k,+}) = \tilde h_{k+1,+}, \qquad p_{k,+}|_{\partial M} = 0.
\end{align*}
In coordinates $(\x,\theta)$, the Laplace-Beltrami operator on $M$ is given by $\Delta_g = 4 c^2 \dbar \partial$, and thus we may turn the previous equation into the Poisson problem
\begin{align*}
    \Delta_g \tilde p_{k,+} = 4 c^2 \dbar \left( c^{k-1} \tilde h_{k+1,+} \right), \qquad \tilde p_{k,+}|_{\partial M} =0,
\end{align*}
providing a constructive reconstruction procedure for $p_{k,+}$ from $h_{k+1,+}$.

At this point, we have determined $p_{m-1}, \dots,  p_1$ in descending order. To reconstruct the source, we now finish the proof depending on which type of source is considered. 

\paragraph{Case (1).} Assume $f$ takes the form $f = f_0 + X_\perp f_\perp$ with $f_0 \in L^2(M)$ and $f_\perp \in \sob(M)$. Projecting \eqref{eq:temp2} onto $H_1$ and $H_0$ gives
\begin{align}
    \begin{split}
	\Lambda_1:&\qquad \eta_+ (p_0+if_\perp) + \eta_- p_{2,+} + a p_{1,+} - (Sp)_{1,+} = (S(u-p))_{1,+} - \tilde f_{1,+}, \\
	\Lambda_{-1}:&\qquad \eta_- (p_0-if_\perp) + \eta_+ p_{2,-} + a p_{1,-} - (Sp)_{1,-} = (S(u-p))_{1,-} - \tilde f_{1,-}, \\
	H_0:&\qquad X_- p_1 + \sigma_a p_0 - f_0 = (S(u-p))_0 - \tilde f_0.	
    \end{split}
    \label{eq:proj1}
\end{align}
The first two equations form a closed elliptic system for $p_0$ and $f_\perp$, determining both of them. Namely, this system looks like
\begin{align}
    \eta_+ (p_0 + if_\perp) = s_{1,+}, \qquad \eta_- (p_0 - if_\perp) = s_{1,-},
    \label{eq:p0fperp}
\end{align}
where the right-hand sides $s_{1,\pm} := (S(u-p))_{1,\pm} - \tilde f_{1,\pm} - \eta_\mp p_{2,\pm} - a p_{1,\pm} + (Sp)_{1,\pm}$ are known. Applying $\eta_-$ to the first one, $\eta_-$ to the second, using the fact that on $\Omega_0$, $\eta_+\eta_- = \eta_- \eta_+ = \frac{1}{4} \Delta_g$ ($\Delta_g$: Laplace-Beltrami operator of the domain), we can decouple the above system as: 
\begin{align*}
    \Delta_g p_0 = 2(\eta_- s_{1,+} + \eta_+ s_{1,-}), \qquad \Delta_g f_\perp = \frac{2}{i} (\eta_- s_{1,+} - \eta_+ s_{1,-}). 
\end{align*}
Since $p_0|_{\partial M} = 0$, $p_0$ is uniquely determined by a homogeneous Dirichlet problem, and we now explain how to derive a Neuman condition for $f_\perp$: taking the difference of the equations in \eqref{eq:p0fperp} and using that $\eta_+ + \eta_- = X$ and $\eta_+ - \eta_- = iX_\perp$, we arrive at 
\begin{align*}
    i X_\perp p_0 + i Xf = s_{1,+} - s_{1,-}.
\end{align*}
Given a point $\x\in \partial M$ with outgoing unit normal $\nu_\x$, we now evaluate this equality at $(\x,\nu_\x)$. The term $X_\perp p_0 (\x,\nu_\x)$ is a tangential derivative of $p_0$ along $\partial M$, therefore vanishing, while the term $Xf_\perp (\x,\nu_\x) = \partial_\nu f_\perp(\x)$. We thus obtain the Neuman boundary condition
\begin{align*}
    \partial_\nu f_\perp(\x) = -i(s_{1,+} - s_{1,-}) (\x,\nu_\x), \qquad \x\in \partial M.
\end{align*}
As a conclusion, $f_\perp$ satisfies a second-order elliptic equation with Neuman boundary condition, and is thus determined up to a constant. Finally, $f_0$ is the only unknown in the last equation of \eqref{eq:proj1} and is therefore determined as well.  

\paragraph{Case (2).} Assume $f = f_1\in H_1$ is a vector field. Projecting \eqref{eq:temp2} onto $H_1$ and $H_0$ gives
\begin{align*}
    H_1:&\qquad X_+ p_0 + X_- p_2 + a p_1 - (Sp)_1 - f_1 = (S(u-p))_1 - \tilde f_1, \\
    H_0:&\qquad X_- p_1 + \sigma_a p_0 = (S(u-p))_0 - \tilde f_0.
\end{align*}
This time, the second equation first determines $p_0$ uniquely, since the subcriticality condition \eqref{eq:subcrit} ensures that $\sigma_a$ vanishes nowhere on $M$. Then the first equations give $f_1$ immediately.

The proof of Theorem \ref{thm:main1} is complete. 

\begin{remark}[On the efficiency of the reconstruction procedure] One may notice that there is no differentiation at any moment in this reconstruction procedure, so it is fairly well-behaved. In addition, the inversion process does not require to solve transport equations with scattering terms, which can be a costly step. The only PDE to be solved for is a free transport one. Moreover, there is no need to store three-dimensional structures, as moments $w_n$ are computed one at a time.
\end{remark}

\subsection{Non-injective problems: proof of Theorem \ref{thm:main2}} \label{sec:main2}

Now, sources of the form $f = f_0 + f_1$ are usually not uniquely reconstructible from their attenuated X-ray transform, and the case with scattering is no different. This is even more true for sources of higher degree. In this section, we now explain how to describe the gauge of these non-injective problems when the scattering kernel has finite harmonic content.

\begin{theorem}\label{thm:gauge}
    Suppose that $k$ has degree $m$ and that $f = f_0 + f_1$ is such that $\M_{a,k} (f) = 0$. Then there exists $p\in W_0^{1,2}(M)$ such that $f = (X+\sigma_a) p$.    
\end{theorem}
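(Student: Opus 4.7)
The plan is to combine Theorem \ref{thm:KMM} with a descending Fourier induction on the components of $u$. Starting from $\M_{a,k}(f)=0$, apply the attenuated X-ray transform $I_a$ to the identity $(X+a)u = f + Su$: since $u|_{\Gamma_-}=0$ and $u|_{\Gamma_+} = \M_{a,k}(f) = 0$, direct integration along geodesics gives $I_a[f+Su] = \M_{a,k}(f) = 0$. Since $k$ has degree $m$ and $f$ has degree $1$, the integrand $f+Su$ has degree at most $m' := \max(1, m)$. Theorem \ref{thm:KMM} then decomposes $f+Su = (X+a)P + h$ uniquely, with $P$ of degree $m'-1$ and components in $\sobzero(M)$, and with $h$ reconstructible from $I_a[f+Su]$; hence $h = 0$. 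Then $(X+a)(u - P)=0$ with $(u-P)|_{\Gamma_-}=0$ (using $u|_{\Gamma_-}=0$ and $P|_{\partial M}=0$), which forces $u = P$ by uniqueness of the attenuated transport problem. In particular, $u$ has finite degree $m'-1$, with Fourier components in $\sobzero(M)$.

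Next I would Fourier-decompose $u = \sum_{n \in \Zm} \tilde u_n(\x) e^{in\theta}$ and project the transport equation onto the $e^{in\theta}$ line. Using that $\eta_+ : E_{n-1} \to E_n$ and $\eta_- : E_{n+1} \to E_n$, and that scattering is diagonal with $(Su)_n = \tilde k_n \tilde u_n e^{in\theta}$, the projection reads
\[
    \eta_+ u_{n-1} + \eta_- u_{n+1} + (a - \tilde k_n) u_n = f_n,
\]
where $f_n = 0$ for $|n| \geq 2$ and $\tilde k_n = 0$ for $|n| > m$. Since $u$ has degree $m' - 1$, we have $u_n = 0$ for $|n| \geq m'$. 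Inspecting level $n = m'$ yields $\eta_+ u_{m'-1} = 0$; writing $u_{m'-1} = \tilde u_{m'-1}(\x) e^{i(m'-1)\theta}$ and using the identity $\eta_+(\tilde q\, e^{ik\theta}) = e^{i(k+1)\theta} c^{1-k} \partial(c^k \tilde q)$, we see that $c^{m'-1}\tilde u_{m'-1}$ is antiholomorphic on the simply connected domain $M$ with zero boundary trace, forcing $\tilde u_{m'-1} = 0$. Iterating at levels $n = m'-1, \ldots, 2$, and symmetrically for negative $n$, annihilates $u_k$ for all $1 \leq |k| \leq m'-1$. Thus $u = u_0 =: p$ is isotropic with $p \in \sobzero(M)$.

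With $u = p$ isotropic, the remaining projections identify the source: at levels $n = \pm 1$, the transport equation becomes $\eta_\pm p = f_{1,\pm}$, whose sum is $Xp = f_1$; at level $n = 0$ it reads $(a - \tilde k_0) p = f_0$, i.e., $\sigma_a p = f_0$ (since $\tilde k_0 = k_0$ and $\sigma_a = a - k_0$). Combining gives
\[
f = f_0 + f_1 = \sigma_a p + Xp = (X + \sigma_a) p,
\]
with $p \in \sobzero(M)$, as required.

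The step I expect to require the most care is the unique continuation inside the descending induction: that $\eta_+ q = 0$ for $q = \tilde q(\x) e^{ik\theta}$ with $\tilde q \in \sobzero(M)$ forces $\tilde q = 0$. Formally this is clean since $c^k \tilde q$ satisfies $\partial(c^k \tilde q) = 0$ with zero boundary trace on a simply connected bounded domain, but one must verify that the usual antiholomorphic-vanishing argument survives in the Sobolev category rather than under classical smoothness (e.g. via integration by parts or a Cauchy-integral argument). Once this is granted, everything else is algebraic bookkeeping built on top of Theorem \ref{thm:KMM}.
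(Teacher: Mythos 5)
Your proposal is correct, but it takes a genuinely different route from the paper's. The paper disposes of this theorem in a few lines by iterating Theorem~\ref{lem:onedown}: since $u|_{\Gamma_\pm}=0$ and the right-hand side $f+Su$ has finite degree, each application of that theorem lowers the degree of $u$ by one, and since $S$ cannot raise the degree, the iteration stops at $u=u_0$ with $u_0|_{\partial M}=0$; the identity $f=(X+a-k_0)u_0=(X+\sigma_a)u_0$ is then read off exactly as in your final step. You instead invoke Theorem~\ref{thm:KMM} once: from $I_a[f+Su]=\M_{a,k}(f)=0$ you get $h=0$ in the decomposition $f+Su=(X+a)P+h$, and uniqueness for $(X+a)w=0$, $w|_{\Gamma_-}=0$ (valid here since $a\ge \sigma_a\ge\delta$) identifies $u=P$, of degree $\max(1,m)-1$ with components in $\sobzero(M)$; you then run an explicit mode-by-mode descent using that $S$ is diagonal on the harmonic decomposition and $f_n=0$ for $|n|\ge 2$, so that each level yields $\eta_+u_{n-1}=0$ (resp.\ $\eta_-u_{-(n-1)}=0$) once $u_n,u_{n\pm1}$ are known to vanish, and unique continuation removes $u_{n-1}$. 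The step you flagged is fine in the Sobolev category: if $\tilde q\in \sobzero(M)$ and $\partial(c^{k}\tilde q)=0$ distributionally, then $c^{k}\tilde q$ is harmonic in the interior and lies in $\sobzero(M)$, so testing the weak harmonicity against $c^{k}\tilde q$ itself gives $\nabla(c^{k}\tilde q)=0$ and hence $\tilde q\equiv0$. What each approach buys: the paper's argument is shorter, but the mechanism is hidden inside Theorem~\ref{lem:onedown}; yours makes transparent how the finite harmonic content of $k$ and the vanishing boundary values of the components force the degree to collapse (essentially re-deriving the degree drop for this particular diagonal right-hand side by elementary complex-analytic means, in the spirit of the reconstruction procedure of Section~\ref{sec:main1}), at the price of importing the stronger decomposition-plus-reconstruction statement of Theorem~\ref{thm:KMM}. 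Since both Theorems~\ref{lem:onedown} and~\ref{thm:KMM} come from \cite{Krishnan2018} and rest on the same machinery, neither route is more elementary in its foundations, but both are valid and yield $p=u_0\in \sobzero(M)$ as required.
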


The proof is based on the solution of the attenuated tensor tomography problem, as described by the following theorem, whose proof is based on \cite[Propositions 4.1 and 4.2]{Paternain2011a} and the use of holomorphic integrating factors as introduced in \cite{Salo2011}. 

\begin{theorem}[Theorem 1 in \cite{Krishnan2018}] \label{lem:onedown}
    Suppose that $(X+a) u = f$ holds on $SM$ with $f$ of degree $m\ge 0$. If $m\ge 1$ and $u|_{\Gamma_\pm} = 0$, then $u$ has degree $m-1$. If $m=0$ and $u|_{\Gamma_\pm} = 0$, then $u = f = 0$.
\end{theorem}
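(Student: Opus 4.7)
The plan is to use Salo's fiberwise holomorphic integrating factors from \cite{Salo2011} to convert the attenuated transport equation $(X+a)u=f$ into an unattenuated one, and then invoke the Paternain--Salo--Uhlmann Fourier-mode analysis on simple surfaces \cite{Paternain2011a} to bound the Fourier content of $u$ by that of $f$.

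First I would construct holomorphic/antiholomorphic integrating factors. Since $(M,g)$ is simple and $a\in C^\infty(M)$, the results in \cite{Salo2011} produce smooth $w^\pm\in C^\infty(\overline{SM})$ with $Xw^\pm+a=0$, such that $w^+$ is fiberwise holomorphic ($w^+\in\bigoplus_{n\ge 0}\Omega_n$) and $w^-$ fiberwise antiholomorphic. Setting $v^\pm:=e^{w^\pm}u$, the boundary condition $u|_{\partial SM}=0$ gives $v^\pm|_{\partial SM}=0$, while a direct computation using $(X+a)u=f$ reduces the problem to the unattenuated equation
\[
Xv^\pm=e^{w^\pm}f,\qquad v^\pm|_{\partial SM}=0.
\]

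Next I would track Fourier content carefully. Since $e^{w^+}$ is holomorphic (modes in $[0,\infty)$) and $f$ has degree $m$ (modes in $[-m,m]$), the right-hand side $e^{w^+}f$ has Fourier modes only in $[-m,+\infty)$; symmetrically $e^{w^-}f$ has modes in $(-\infty,m]$. The core step is to apply the PSU one-sided Fourier propagation result in the spirit of \cite[Propositions 4.1--4.2]{Paternain2011a}: on a simple surface, any $v\in C^\infty(SM)$ with $v|_{\partial SM}=0$ and $Xv$ having only modes $\ge -m$ must itself have only modes $\ge -(m-1)$, and analogously on the antiholomorphic side. Applied to $v^\pm$, this yields that $v^+$ has modes in $[-(m-1),+\infty)$ and $v^-$ has modes in $(-\infty,m-1]$.

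Finally I would transfer the bounds back to $u$: since $u=e^{-w^+}v^+$ and $e^{-w^+}$ is again fiberwise holomorphic, the lower mode bound on $v^+$ descends to $u$, so $u$ has no modes below $-(m-1)$; the symmetric argument with $w^-$ shows $u$ has no modes above $m-1$. Hence $u$ has degree $\le m-1$. In the borderline case $m=0$, exactly the same analysis forces $v^\pm$ to have no Fourier modes whatsoever, whence $u=0$ and therefore $f=(X+a)u=0$.

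The main obstacle is the PSU Fourier-propagation step. This is the place where simplicity of $(M,g)$ (absence of conjugate points) enters essentially, typically via a Pestov identity with boundary term, to rule out extraneous low/high Fourier modes of $v^\pm$. Secondarily, the construction of the integrating factors is where the $C^\infty$ regularity of $a$ is used, through the microlocal construction of fiberwise holomorphic solutions of $Xw+a=0$; weakening this regularity would force one to revisit that ingredient, as noted already in the remark following Theorem \ref{thm:main2}.
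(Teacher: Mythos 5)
Your proposal is exactly the route the paper relies on for this statement, which it imports as Theorem 1 of \cite{Krishnan2018} and for which it gives no independent proof beyond the attribution: construct fiberwise holomorphic/antiholomorphic integrating factors via \cite{Salo2011} (needing $a\in C^\infty$ and simplicity) and then apply the mode-propagation results of \cite[Propositions 4.1--4.2]{Paternain2011a} to cap the Fourier degree of $u$ at $m-1$, with the $m=0$ case forcing $u=f=0$. The only slip is the sign convention: with $Xw^\pm + a = 0$ the substitution should be $v^\pm = e^{-w^\pm}u$ (or else solve $Xw^\pm = a$), since $v^\pm = e^{w^\pm}u$ would give $Xv^\pm = e^{w^\pm}(f-2au)$ rather than the unattenuated equation; as $e^{-w^\pm}$ is again fiberwise holomorphic/antiholomorphic, the rest of your argument is unaffected.
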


\begin{proof}[Proof of Theorem \ref{thm:gauge}.] Under the assumption of the theorem, there exists $u$ such that 
    \begin{align*}
	(X+a)u = f + Su, \qquad (SM), \qquad u|_{\Gamma_\pm} =0.
    \end{align*}
    Since the right-hand side has degree at most $m$, then by Theorem \ref{lem:onedown}, $u$ has degree at most $m-1$. But then the right-hand side has degree at most $m-1$, and so on, until the right-hand side has degree at most $1$ (since at least $f = f_0 + f_1$). Then by Theorem \ref{lem:onedown}, $u = u_0$ with $u_0|_{\partial M}=0$, and the transport equation reads 
    \begin{align*}
	(X+a)u_0 = f_0 + f_1 + k_0 u_0.
    \end{align*}
    Upon setting $p:= u_0$ and noticing that $a-k_0 = \sigma_a$, the proof follows.     
\end{proof}

Following this idea, we push this further to a source $f$ of arbitrary finite degree. 

\begin{proof}[Proof of Theorem \ref{thm:main2}] $(\impliedby)$ Suppose $f = (X+a-S)p$ for some $p$ of degree $m-1$ with components in $\sobzero(M)$. Then the transport problem
    \[ (X+a) u = f + Su \qquad (SM), \qquad u|_{\Gamma_-} = 0, \]
    can be rewritten as 
    \[ (X+a) (u-p) = S(u-p) \qquad (SM), \qquad (u-p)|_{\Gamma_-} = 0, \]
    so from the forward theory, $u-p$ vanishes identically, including at $\Gamma_+$, and thus 
    \[ \M_{a,k} (f) = u|_{\Gamma_+} = (u-p)|_{\Gamma_+} = 0. \]
    
    $(\implies)$ Suppose $f$ has degree $m$ and that $\M_{a,k} (f) = 0$. Let $n$ be the degree of $k$. Then the solution $u$ of \eqref{eq:transport} vanishes on $\Gamma_\pm$ and we have the relation
    \begin{align}
	(X+a) u = Su + f, \qquad u|_{\Gamma_\pm} = 0.
	\label{eq:last}
    \end{align}
    If $n\le m$, then the right hand side of \eqref{eq:last} has degree $m$, therefore by Theorem \eqref{lem:onedown}, $u$ has degree $m-1$ and we conclude by setting $p=u$.

    If $n>m$, then the right hand side of \eqref{eq:last} has degree $n$, therefore by Theorem \eqref{lem:onedown}, $u$ has degree $n-1$. In particular, the term $Su$ has degree $n-1$ and thus so does the right hand side of \eqref{eq:last}. We can inductively decrease the degree of the right hand side in this fashion, until $n=m$, to be back to the previous case $n\le m$. Theorem \ref{thm:main2} is proved.
\end{proof}

\paragraph{Acknowledgements.} F.M. thanks Plamen Stefanov for fruitful discussions. G.B. was supported by NSF Grant DMS-1908736 and ONR Grant N00014-17-1-2096. F.M. was supported by NSF Grant DMS-1814104 and a UC Hellman Fellowship.


\begin{thebibliography}{10}
\bibitem{Assylbekov2017}
{\sc Y.~M. Assylbekov, F.~Monard, and G.~Uhlmann}, {\em Inversion formulas and
  range characterizations for the attenuated geodesic ray transform}, Journal
  de Math\'ematiques Pures et Appliqu\'ees, 111 (2018), pp.~161--190.
\newblock %arxiv:1609.04361.

\bibitem{Assylbekov2015}
{\sc Y.~M. Assylbekov and Y.~Yang}, {\em An inverse radiative transfer in
  refractive media equipped with a magnetic field}, The Journal of Geometric
  Analysis, 25 (2015), pp.~2148--2184.

\bibitem{Bal2008b}
{\sc G.~Bal and A.~Jollivet}, {\em Stability estimates in stationary inverse
  transport}, Inverse Probl. Imaging, 2 (2008), pp.~427--454.

\bibitem{Bal2010b}
\leavevmode\vrule height 2pt depth -1.6pt width 23pt, {\em Approximate
  stability estimates in inverse transport theory}, in Biomedical Mathematics,
  Y.~Censor, M.~Jiang, and G.~Wang, eds., 2010.

\bibitem{Bal2007}
{\sc G.~Bal and A.~Tamasan}, {\em Inverse source problem in transport
  equations}, SIAM J. Math. Anal., 39 (2007), pp.~57--76.

\bibitem{Choulli1996}
{\sc M.~Choulli and P.~Stefanov}, {\em Reconstruction of the coefficients of
  the stationary transport equation from boundary measurements}, Inverse
  Problems, 12 (1996), pp.~L19--L23.

\bibitem{Choulli1999}
\leavevmode\vrule height 2pt depth -1.6pt width 23pt, {\em An inverse boundary
  value problem for the stationary transport equation}, Osaka J. Math., 36
  (1999), pp.~87--104.

\bibitem{Dautray5}
{\sc R.~Dautray and J.-L. Lions}, {\em Mathematical Analysis and Numerical
  Methods for Science and Technology: Volume 5 Evolution Problems I}, Springer
  Science \& Business Media, 2010.

\bibitem{Dautray6}
\leavevmode\vrule height 2pt depth -1.6pt width 23pt, {\em Mathematical
  Analysis and Numerical Methods for Science and Technology: Volume 6 Evolution
  Problems II}, Springer Science \& Business Media, 2012.

\bibitem{Fujiwara2019}
{\sc H.~Fujiwara, K.~Sadiq, and A.~Tamasan}, {\em A fourier approach to the
  inverse source problem in an absorbing and anisotropic scattering medium},
  arXiv:1907.07423,  (2019).

\bibitem{Krishnan2018}
{\sc V.~P. Krishnan, R.~K. Mishra, and F.~Monard}, {\em On solenoidal-injective
  and injective ray transforms of tensor fields on surfaces}, Journal of
  Inverse and Ill-posed Problems (to appear),  (2018).
\newblock arXiv:1807.10730.

\bibitem{McDowall2005}
{\sc S.~McDowall}, {\em Optical tomography on simple riemannian surfaces},
  Communications in Partial Differential Equations, 30 (2005), pp.~1379--1400.

\bibitem{McDowall2009}
{\sc S.~McDowall}, {\em Optical tomography for media with variable index of
  refraction}, Cubo, 11 (2009), pp.~71--97.

\bibitem{McDowall2010}
{\sc S.~McDowall, P.~Stefanov, and A.~Tamasan}, {\em Gauge equivalence in
  stationary radiative transport through media with varying index of
  refraction}, Inverse Probl. Imaging, 4 (2010), pp.~151--167.

\bibitem{McDowall2010a}
\leavevmode\vrule height 2pt depth -1.6pt width 23pt, {\em Stability of the
  gauge equivalent in stationary inverse transport}, Inverse Probl., 26 (2010),
  p.~025006.

\bibitem{McDowall2011}
\leavevmode\vrule height 2pt depth -1.6pt width 23pt, {\em Stability of the
  gauge equivalent classes in inverse stationary transport in refractive
  media}, Contemp. Math., 559 (2011), pp.~85--100.

\bibitem{McDowall2004}
{\sc S.~R. McDowall}, {\em An inverse problem for the transport equation in the
  presence of a riemannian metric}, Pacific journal of mathematics, 216 (2004),
  pp.~303--326.

\bibitem{Monard2015}
{\sc F.~Monard}, {\em Inversion of the attenuated geodesic {X}-ray transform
  over functions and vector fields on simple surfaces}, SIAM J. Math. Anal., 48
  (2016), pp.~1155--1177.
\newblock %arXiv:1503.07190.

\bibitem{Monard2017a}
\leavevmode\vrule height 2pt depth -1.6pt width 23pt, {\em Efficient tensor
  tomography in fan-beam coordinates. {II}: attenuated transforms}, Inverse
  Problems and Imaging, 12 (2018), pp.~433--460.
\newblock %arXiv:1704.08294.

\bibitem{Paternain2011a}
{\sc G.~Paternain, M.~Salo, and G.~Uhlmann}, {\em Tensor tomography on
  surfaces}, Inventiones Math., 193 (2013), pp.~229--247.
\newblock %arXiv:1109.0505.

\bibitem{Paternain2015}
{\sc G.~P. Paternain, M.~Salo, and G.~Uhlmann}, {\em Invariant distributions,
  beurling transforms and tensor tomography in higher dimensions},
  Mathematische Annalen, 363 (2015), pp.~305--362.

\bibitem{Salo2011}
{\sc M.~Salo and G.~Uhlmann}, {\em {T}he {A}ttenuated {R}ay {T}ransform on
  {S}imple {S}urfaces}, J. Diff. Geom., 88 (2011), pp.~161--187.

\bibitem{Sharafudtinov1994}
{\sc V.~Sharafutdinov}, {\em Integral geometry of tensor fields}, {VSP},
  Utrecht, The Netherlands, 1994.

\bibitem{Sharafutdinov1994}
{\sc V.~A. Sharafutdinov}, {\em An inverse problem of determining a source in
  the stationary transport equation for a medium with refraction}, Siberian
  Math. J., 35 (1994).

\bibitem{Sharafutdinov1999}
\leavevmode\vrule height 2pt depth -1.6pt width 23pt, {\em The inverse problem
  of determining the source in the stationary transport eqaution on a
  riemanninan manifold}, Journal of Mathematical Sciences, 96 (1999).

\bibitem{Sparr1995}
{\sc G.~Sparr, K.~Strahlen, K.~Lindstrom, and H.~Persson}, {\em Doppler
  tomography for vector fields}, Inverse Problems, 11 (1995), p.~1051.

\bibitem{Stefanov2009a}
{\sc P.~Stefanov and A.~Tamasan}, {\em Uniqueness and non-uniqueness in inverse
  radiative transfer}, Proceedings of the American Mathematical Society, 137
  (2009), pp.~2335--2344.

\bibitem{Stefanov2008a}
{\sc P.~Stefanov and G.~Uhlmann}, {\em An inverse source problem in optical
  molecular imaging}, Analysis \& PDE, 1 (2008), pp.~115--126.

\end{thebibliography}
\end{document}